\documentclass [a4paper,12pt]{amsart}
\usepackage{graphicx}
\usepackage[ansinew]{inputenc}    
\usepackage[all]{xy}

\usepackage{amssymb,amscd}

\setlength{\topmargin}{-1.0cm} \setlength{\textwidth}{16cm} \setlength{\textheight}{23.5cm}
\setlength{\oddsidemargin}{0cm} \setlength{\evensidemargin}{0cm} \setlength{\footskip}{1cm}
\setlength{\headsep}{0.8cm}

\newtheorem{thm}{Theorem}[section]

\def\C{\mathbb C}

\def\dim{\operatorname{dim}}

\usepackage{graphicx}
\usepackage[ansinew]{inputenc}    
\usepackage[all]{xy}
\usepackage{amssymb,amscd}

\setlength{\topmargin}{-1.0cm} \setlength{\textwidth}{16cm}
\setlength{\textheight}{23.5cm}
\setlength{\oddsidemargin}{0cm} \setlength{\evensidemargin}{0cm}
\setlength{\footskip}{1cm}
\setlength{\headsep}{0.8cm}

\newtheorem{cor}[thm]{Corollary}
\newtheorem{teo}[thm]{Theorem}
\newtheorem{lem}[thm]{Lemma}

\theoremstyle{definition}

\newtheorem{defi}[thm]{Definition}

\def\C{\mathbb C}

\def\dim{\operatorname{dim}}

\def\Eu{\operatorname{Eu}}
\usepackage{color}

\thanks{The first author has been partially supported by the MICINN Grant
PGC2018-094889-B-I00. 
The second author has been partially supported by FAPESP  2016/25730-0. The third author has been partially supported by CAPES. The fourth author is partially supported by CNPq Grant 309086/2017-5  and FAPESP Grant 2018/22090-5.}

\keywords{isolated hypersurface singularity, Bruce-Roberts number, logarithmic characteristic variety}

\subjclass[2000]{Primary 32S25; Secondary 58K40, 32S50}

 \everymath{\displaystyle}
\begin{document}

\title[The Bruce-Roberts number of a function on a hypersurface]{The Bruce-Roberts number of a function on a hypersurface with isolated singularity}

\author{J.J. Nuño-Ballesteros, B. Oréfice-Okamoto, B. K. L. Pereira, J.N. Tomazella}

\address{Departament de Matem\`atiques,
Universitat de Val\`encia, Campus de Burjassot, 46100 Burjassot
SPAIN}
\email{Juan.Nuno@uv.es}

\address{Departamento de Matem\'atica, Universidade Federal de S\~ao Carlos, Caixa Postal 676,
	13560-905, S\~ao Carlos, SP, BRAZIL}
\email{bruna@dm.ufscar.br}

\address{Departamento de Matem\'atica, Universidade Federal de S\~ao Carlos, Caixa Postal 676,
	13560-905, S\~ao Carlos, SP, BRAZIL}
\email{barbarapereira@dm.ufscar.br}

\address{Departamento de Matem\'atica, Universidade Federal de S\~ao Carlos, Caixa Postal 676,
	13560-905, S\~ao Carlos, SP, BRAZIL}

\email{tomazella@dm.ufscar.br}

\maketitle

%


\begin{abstract}
Let $(X,0)$ be an isolated hypersurface singularity defined by $\phi\colon(\C^n,0)\to(\C,0)$ and $f\colon(\C^n,0)\to\C$ such that the Bruce-Roberts number $\mu_{BR}(f,X)$ is finite. We first prove that $\mu_{BR}(f,X)=\mu(f)+\mu(\phi,f)+\mu(X,0)-\tau(X,0)$, where $\mu$ and $\tau$ are the Milnor and Tjurina numbers respectively of a function or an isolated complete intersection singularity. Second, we show that the logarithmic characteristic variety $LC(X,0)$ is Cohen-Macaulay. Both theorems generalize the results of a previous paper by some of the authors, in which the hypersurface $(X,0)$ was assumed to be weighted homogeneous.
\end{abstract}

\section{Introduction}

The Milnor number $\mu(f)$ is probably the most important invariant of a holomorphic function germ $f\colon(\C^{n},0)\to\C$. It is defined algebraically as the colength of the Jacobian ideal $Jf$, generated by the partial derivatives of $f$ in $\mathcal O_n$, the local ring of holomorphic function germs $(\C^n,0)\to\C$. The finiteness of $\mu(f)$ is equivalent to that $f$ has isolated singularity, which is also equivalent, by Mather's theorem, to that $f$ is finitely determined with respect to $\mathcal{R}$, the group of coordinate changes in the source. From the topological viewpoint, Milnor showed in \cite{Milnor} that the Milnor fibre of $f$ has the homotopy type of a wedge of $(n-1)$-spheres and that $\mu(f)$ is precisely the number of such spheres. Another important property is the conservation of the Milnor number, which implies that $\mu(f)$ is equal to the number of critical points of a Morsification of $f$.

An interesting and natural extension of this theory appears when we look at function germs defined over singular varieties. In \cite{B-R}, Bruce and Roberts studied function germs $f\colon(\C^{n},0)\to\C$ considered over a germ of analytic variety $(X,0)$ in $(\C^{n},0)$. They defined an invariant, which we call the Bruce-Roberts number of $f$ with respect to $X$, and denote it by $\mu_{BR}(f,X)$. This number is a generalization of the Milnor number in the sense that $\mu_{BR}(f,X)=\mu(f)$ when $X=\C^n$. Moreover, it has also the properties that $\mu_{BR}(f,X)$ is finite if and only if $f$ has isolated singularity over $(X,0)$ (in the stratified sense), if and only if $f$ is  finitely $\mathcal{R}_{X}$-determined ($\mathcal{R}_{X}$ is the subgroup of $\mathcal R$ of coordinate changes which preserve $X$).

Since then, the Bruce-Roberts number has been studied by many authors, see for instance  \cite{tomazellaruas,carlesruas,Nivaldo,Grego,Orefice,Tajima}. In particular, in \cite{Orefice} the first, second and fourth authors considered the case that $(X,0)$ is a weighted homogeneous isolated hypersurface singularity. The first main result of \cite{Orefice} is the equality
$$
\mu_{BR}(f,X)=\mu(f)+\mu(\phi,f),
$$
where $\phi\colon(\C^{n},0)\to(\C,0)$ is the defining equation of $(X,0)$ and $\mu(\phi,f)$ is the Milnor number of the isolated complete intersection singularity (ICIS) defined by $(\phi,f)\colon(\C^n,0)\to\C^2$ (in the sense of Hamm  \cite{hamm2,hamm1}). We remark that this result was produced during the PhD of the second author (see \cite{tesebruna}) and, at that time, it was conjectured that if $(X,0)$ is an isolated hypersurface singularity (not necessarily weighted homogeneous), then
\begin{equation}\label{main-result}
\mu_{BR}(f,X)=\mu(f)+\mu(\phi,f)+\mu(X,0)-\tau(X,0),
\end{equation} where $\tau(X,0)$ is the Tjurina number. This conjecture is the natural one since $\mu(X,0)=\tau(X,0)$ in the weighted homogeneous case.

In this paper we give a proof of \eqref{main-result}, which gives a simple way to compute the Bruce-Roberts number of any function with respect to any isolated hypersurface singularity. In fact, with our method, we only need to compute the Tjurina number and the colength of the image of the differential of $f$ over the trivial tangent vector fields to the hypersurface.  As a byproduct of our technique, we also obtain a new formula for the Tjurina number of an isolated hypersurface singularity in terms of its tangent vector fields  (see also \cite{Tajima}).  We remark that \eqref{main-result} has been also proved recently by Kourliouros \cite{Grego} by using a totally different approach.

The second main result of \cite{Orefice} was that the logarithmic characteristic variety $LC(X,0)$ is Cohen-Macaulay, again when $(X,0)$ is a weighted homogeneous isolated hypersurface singularity. This is important and has many interesting applications. In fact, this implies the conservation of the Bruce-Roberts number and  that  $\mu_{BR}(f,X)$ is equal to the number of critical points of a Morsification of $f$ on each logarithmic strata, counted multiplicity (see \cite{B-R}).  Here we also extend this result to the general case that $(X,0)$ is not necessarily weighted homogeneous. We remark that $LC(X,0)$ is never Cohen-Macaulay when $(X,0)$ has codimension $>1$, so the only interesting case is when $(X,0)$ is a hypersurface. We give some applications of this result for $\mu_{BR}$-constant families of functions.

\section{The Bruce-Roberts Number}

Let $\mathcal{O}_{n}$ be the ring of germs of analytic functions $f\colon(\C^{n},0)\to\C$, and let $(X,0)$ be a germ of hypersurface with isolated singularity in $(\C^n,0)$. Two germs $f,g\colon(\C^{n},0)\to\C$ are $\mathcal{R}_{X}$-equivalent, (respectively $C^{0}$-$\mathcal{R}_{X}$-equivalent) if there exists a germ of diffeomorphism (respectively homeomorphism) $\psi\colon(\C^{n},0)\to(\C^{n},0)$, such that $\psi(X)=X$ and $f=g\circ\psi$.

We denote by $\Theta_n$ the $\mathcal {O}_{n}$-module of germs of vector fields on $(\C^n,0)$, and by $\Theta_X$ the subset of $\Theta_{n}$ of vector fields which are tangent to $X$, that is, $$\Theta_X:=\{\xi\in\Theta_n \, ; \, dg(\xi)=\xi(g)\in I(X), \, \forall g\in I(X)\}$$ where $I(X)$ is the ideal in $\mathcal{O}_n$ consisting of germs of functions vanishing on $X$. We denote by $df(\Theta_{X})$ the ideal in ${\mathcal O}_{n}$  generated by $\xi(f):=df(\xi)$ such that $\xi\in\Theta_{X}$ and  $df$ is the differential of $f$. We consider $\Theta_{X}^{T}$ the $\mathcal{O}_{n}$-submodule of $\Theta_{X}$ generated by the trivial vector fields, that is, $\Theta_{X}^{T}$ is generated by $$\phi\frac{\partial}{\partial x_{i}},\frac{\partial\phi}{\partial x_{j}}\frac{\partial}{\partial x_{k}}-\frac{\partial\phi}{\partial x_{k}}\frac{\partial}{\partial x_{i}}, \textup{ with } i,j,k=1,...,n; k\neq j. $$
Therefore, for any function germ $f\colon(\C^{n},0)\to\C$, $df(\Theta_{X}^{T})=\langle\phi\tfrac{\partial f}{\partial x_{i}}\rangle+J(f,\phi)$, where $J(f,\phi)$ is the ideal in $\mathcal{O}_{n}$ generated by the maximal minors of the Jacobian matrix of $(f,\phi)$.

\begin{defi}
Let $f\in\mathcal O_n$. The number $$\mu_{BR}(f,X)=\dim_\C\frac{\mathcal O_n}{df(\Theta_X)}$$
is the \emph{Bruce-Roberts number} of $f$ with respect to $X$.
\end{defi}

It follows from Nakayama's Lemma and the Hilbert Nullstellenstaz that the Bruce-Roberts number of $f$ with respect to $(X,0)$ is finite if and only if the variety of zeros of the ideal $df(\Theta_{X})$ is equal to the origin or empty. Moreover, we have other important equivalences, for instance, that $\mu_{BR}(f,X)$ is finite if and only if $f$ is finitely $\mathcal{R}_{X}$-determined (see \cite{B-R}). 

 
Since $df(\Theta_{X})\subset Jf$, the Bruce-Roberts number is greater or equal to the Milnor number. In \cite{Orefice} we show that if $(X,0)$ is a weighted homogeneous hypersurface with isolated singularity and $f$ is a finitely $\mathcal{R}_{X}$-determined function germ, then $$\mu_{BR}(f,X)=\mu(f)+\mu(\phi,f),$$ where $\mu(\phi,f)$ is the Milnor number of the ICIS $(X\cap f^{-1}(0),0)$, as defined by Hamm in \cite{hamm2,hamm1}. 
The main goal of this work is to extend this formula to the general case that $(X,0)$ is hypersurface with isolated singularity (not necessarily weighted homogeneous). We use the following two lemmas.

\begin{lem}\label{7.8}\cite{B-R} Let $R$ be a local ring, $A$ a $q\times p$ matrix, with $p\geq q$ and $v$ a sequence of $p$ elements in $R$.
Let $\hat{A}$ denote the $(q-1)\times p$ matrix obtained by deleting the last row of $A$ and put $$u=(u_1,\dots ,u_q)^T=A.v^T,\quad \hat{u}=(u_1,\dots ,u_{q-1})^T=\hat{A}.v^T.$$
Denote the ideal in $R$ generated by the $q\times q$ minors of $A$ by $I_q(A)$ and define $I_{q-1}(\hat{A})$ in a similar way.
Finally let 
\begin{align*}
I'&=I_{q-1}(\hat{A})+\left\langle
u_1,\dots ,u_{q-1}\right\rangle,\\
I''&=I_q(A)+\left\langle
u_1,\dots ,u_q\right\rangle,\\
I&=I_q(A)+\left\langle
u_1,\dots ,u_{q-1}\right\rangle.
\end{align*}
Then, if $R$ is Cohen-Macaulay of dimension $p$ and $l\left(R/I\right)<\infty$, $$l\left(R/I\right)=l\left(R/{I'}\right)+l\left(R/{I''}\right).$$
\end{lem}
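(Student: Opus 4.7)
The plan is to derive the identity $l(R/I)=l(R/I')+l(R/I'')$ from a short exact sequence
\[
0 \longrightarrow R/I' \xrightarrow{\;\cdot u_q\;} R/I \longrightarrow R/I'' \longrightarrow 0
\]
after which additivity of length yields the conclusion. Note that since $I\subset I'$ (because $I_q(A)\subset I_{q-1}(\hat A)$, using Laplace expansion of a $q\times q$ minor along the last row) and $I\subset I''$, and since $l(R/I)<\infty$, all three quotients are of finite length and additivity of length is applicable. The right-hand arrow is the canonical surjection; since $I''=I+\langle u_q\rangle$ its kernel is the principal submodule generated by the class of $u_q$, so exactness at $R/I$ and $R/I''$ is automatic as soon as the left arrow is shown to land in this kernel, i.e.\ once $u_q I'\subset I$.

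For well-definedness, the inclusion $u_q u_i\in I$ for $i<q$ is immediate. For a generator $M\in I_{q-1}(\hat A)$ supported on columns $j_1<\cdots<j_{q-1}$, I would apply the following Cramer-type trick: form the $q\times q$ matrix $C$ whose columns are $A_{j_1},\dots,A_{j_{q-1}},u$. Since $u=\sum_k v_k A_k$, multilinearity of the determinant gives
\[
\det C \;=\; \sum_{k\notin\{j_1,\dots,j_{q-1}\}} v_k\,\det[A_{j_1},\dots,A_{j_{q-1}},A_k]\;\in\; I_q(A)\subset I,
\]
while Laplace expansion of $\det C$ along its last column yields $\det C=\pm u_q M+\sum_{i<q}\pm u_i N_i$ for certain $(q-1)\times(q-1)$ minors $N_i$ of $A$. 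Comparing the two expressions produces $u_q M\in I_q(A)+\langle u_1,\dots,u_{q-1}\rangle=I$, as required.

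The decisive step is injectivity of the left arrow, equivalently $(I:u_q)\subset I'$; the reverse inclusion has just been proved. This is where the Cohen-Macaulay hypothesis on $R$ together with the finite colength of $I$ must enter in an essential way. The cleanest approach I see is to identify the three ideals $I,I',I''$ as arising naturally from an Eagon--Northcott/Buchsbaum--Rim-type complex associated with the matrix $A$ and the vector $v$: the perfectness of the determinantal ideal $I_q(A)$ combined with the Cohen-Macaulayness of $R$ then forces the colon ideal $(I:u_q)$ to coincide with $I'$. An alternative would be an induction on $q$, with trivial base case $q=1$ (where $I'=R$ and $I=I''$), reducing the general case to the analogous statement for the submatrix $\hat A$ and the truncated vector $\hat u$; however, the inductive step is delicate because $\hat u_{q-1}=u_{q-1}$ sits in $I$ but not in the analogue of $\tilde I$ built from $\hat A$, so care is needed in how the reduction is organised.

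The main obstacle is this injectivity step. Well-definedness is a routine Laplace-expansion identity that does not use the Cohen-Macaulay hypothesis at all, but proving $(I:u_q)\subset I'$ requires genuine use both of the Cohen-Macaulayness of $R$ and of the perfectness of the determinantal ideal $I_q(A)$; naive element manipulation runs out of steam, and one seems forced to invoke the structural theory of determinantal complexes (or a carefully set-up induction) to conclude.
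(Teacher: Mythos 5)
First, note that the paper offers no proof of this statement: it is quoted from Bruce and Roberts \cite{B-R} (their Lemma 7.8), so there is no in-paper argument to compare yours with, and I can only assess the proposal on its own terms. Your skeleton is the natural one. Since $I''=I+\langle u_q\rangle$ and $I\subset I'$, $I\subset I''$, the desired identity would indeed follow from exactness of
\[
0 \longrightarrow R/I' \stackrel{\cdot u_q}{\longrightarrow} R/I \longrightarrow R/I'' \longrightarrow 0 ,
\]
and your verification of the easy parts is correct: the Cramer/Laplace identity does give $u_q\,I_{q-1}(\hat{A})\subset I_q(A)+\langle u_1,\dots,u_{q-1}\rangle$, hence $u_qI'\subset I$, and the image of the left-hand map is visibly the kernel of the right-hand projection.

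However, the proof is not complete, and you say so yourself: the injectivity of multiplication by $u_q$, equivalently $(I:u_q)\subset I'$, is only gestured at via ``Eagon--Northcott/Buchsbaum--Rim'' complexes or an unspecified induction, neither of which is carried out. This is not a technical loose end; it is the entire content of the lemma. It is the only place where the hypotheses that $R$ is Cohen--Macaulay of dimension $p$ and that $l(R/I)<\infty$ can enter (everything you actually prove holds over an arbitrary ring), and without those hypotheses the length formula is false, so no argument omitting them can be complete. A genuine proof must, for instance, establish that the relevant determinantal-type ideals achieve the maximal grade $p$ (this is where $l(R/I)<\infty$ is used) and are perfect, and then deduce the colon identity $(I:u_q)=I'$; alternatively one can run a conservation-of-multiplicity/specialization argument reducing to a generic situation where $V(I)$ splits transversally as $V(I')\cup V(I'')$. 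As it stands, your submission is a correct reduction of the lemma to its hardest step, not a proof of it.
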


\begin{lem}\label{csr}
Let $(X,0)$ be an isolated hypersurface singularity in $(\C^{n},0)$ and $f\in{\mathcal O_{n}}$. Then, $\mu_{BR}(f,X)<\infty$ if and only if $$\dim_{\C}\frac{\mathcal O_{n}}{df(\Theta_{X}^{T})}<\infty.$$ 
\end{lem}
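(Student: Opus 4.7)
The plan is to reduce both finiteness conditions to statements about germs of zero sets. The analytic Nullstellensatz together with noetherianness of $\mathcal O_n$ gives, for any ideal $I\subseteq\mathcal O_n$, the equivalence $\dim_\C\mathcal O_n/I<\infty\iff V(I)\subseteq\{0\}$ as a germ at $0$. The paper already records this for $df(\Theta_X)$, and the same applies verbatim to the finitely generated ideal $df(\Theta_X^T)$. So the lemma reduces to showing $V(df(\Theta_X))\subseteq\{0\}$ if and only if $V(df(\Theta_X^T))\subseteq\{0\}$.

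One direction is immediate. Since $\Theta_X^T\subseteq\Theta_X$ we have $df(\Theta_X^T)\subseteq df(\Theta_X)$, and hence $V(df(\Theta_X))\subseteq V(df(\Theta_X^T))$; if the second zero set is $\{0\}$ as a germ, so is the first. For the converse, I would assume $\mu_{BR}(f,X)<\infty$, take an arbitrary point $p\neq 0$ in a small representative neighbourhood of $0$ lying in $V(df(\Theta_X^T))$, and aim to show $p\in V(df(\Theta_X))$; since the latter is $\{0\}$, this forces $p=0$ and closes the argument.

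The natural case split is on $\phi(p)$. If $\phi(p)\neq 0$, the explicit generators $\phi\,\partial f/\partial x_i$ of $df(\Theta_X^T)$ force $\partial f/\partial x_i(p)=0$ for every $i$, and then $\xi(f)(p)=0$ for every $\xi\in\Theta_n$, in particular for every $\xi\in\Theta_X$. If $\phi(p)=0$, then $p\in X\setminus\{0\}$, and the isolated-singularity hypothesis on $(X,0)$ enters: $p$ must be a smooth point of $X$, so $\nabla\phi(p)\neq 0$. The vanishing of the remaining generators $(\partial\phi/\partial x_j)(\partial f/\partial x_k)-(\partial\phi/\partial x_k)(\partial f/\partial x_j)$ at $p$ says that the two rows $\nabla\phi(p)$ and $\nabla f(p)$ are linearly dependent, so $\nabla f(p)=\lambda\nabla\phi(p)$ for some $\lambda\in\C$. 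Combined with the fact that any $\xi\in\Theta_X$ satisfies $\xi(\phi)\in(\phi)$ and hence $d\phi(\xi)(p)=0$, this yields $\xi(f)(p)=\lambda\,d\phi(\xi)(p)=0$, as required. The argument is essentially bookkeeping; the only substantive ingredient is the isolated-singularity hypothesis, used precisely to guarantee the rank-one reduction in the second case. I do not foresee a serious obstacle.
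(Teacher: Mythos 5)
Your proposal is correct, and the easy direction (from $df(\Theta_X^T)\subset df(\Theta_X)$) coincides with the paper's. The interesting direction, however, is handled by a genuinely different key step. The paper also picks a point $x\neq 0$ in $V(\langle\phi\tfrac{\partial f}{\partial x_i}\rangle+J(f,\phi))$ and uses $V(Jf)=\{0\}$ to force $\phi(x)=0$, but it then concludes by observing that $x\in V(\langle\phi\rangle+J(f,\phi))$ and invoking an external result (Proposition 2.8 of Bivi\`a-Ausina--Ruas) asserting that $(\phi,f)$ is an ICIS when $\mu_{BR}(f,X)<\infty$, so that this zero set is just the origin. You instead stay inside the definition of $\Theta_X$: at a point $p\in X\smallsetminus\{0\}$, smoothness of $X$ gives $\nabla\phi(p)\neq 0$, the vanishing of the $2\times 2$ minors gives $\nabla f(p)=\lambda\nabla\phi(p)$, and the tangency condition $d\phi(\xi)(p)=0$ for $\xi\in\Theta_X$ then kills $df(\xi)(p)$, placing $p$ in $V(df(\Theta_X))=\{0\}$. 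This is self-contained and actually proves the slightly stronger statement that $V(df(\Theta_X^T))$ and $V(df(\Theta_X))$ coincide as germs at the origin, at the modest cost of redoing by hand the pointwise rank argument that the cited proposition packages; the paper's route is shorter on the page but imports the ICIS finiteness from elsewhere. Both arguments are valid.
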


\begin{proof}
	Assume that $\mu_{BR}(f,X)<\infty$ but $\dim_{\C} \mathcal O_{n}/df(\Theta_{X}^{T})=\infty$. By Nakayama's Lemma and the Hilbert Nullstelllensatz, the variety $V(\langle\phi\tfrac{\partial f}{\partial x_{i}}\rangle+J(f,\phi))$ has dimension $\ge 1$.
Let $x\neq 0$ 
 such that $x\in V(\langle\phi\tfrac{\partial f}{\partial x_{i}}\rangle+J(f,\phi))$. In particular, 
 $$\phi(x)\tfrac{\partial f}{\partial x_{i}}(x)=0,\forall i=1,\dots,n.$$ 
Since $\mu(f)\le\mu_{BR}(f,X)<\infty$, $V(Jf)=\{0\}$. Hence $\phi(x)=0$, and therefore, $x\in V(\langle\phi\rangle+J(f,\phi))$. But this is in contradiction with \cite[Proposition 2.8]{carlesruas}. 
The converse follows because $df(\Theta_{X}^{T})\subset df(\Theta_{X}).$
\end{proof}

The following theorem is our first new characterization of the Bruce-Roberts number.

\begin{teo}\label{dfthetax}
Let $(X,0)$ be an isolated hypersurface singularity in $(\C^{n},0)$ and $f\in{\mathcal O_{n}}$ such that $\mu_{BR}(f,X)<\infty$. Then,
$$\mu_{BR}(f,X)=\mu(f)+\mu(\phi, f)+\mu(X,0)-\dim_{\C}\frac{df(\Theta_{X})}{df(\Theta_{X}^{T})}.$$
\end{teo}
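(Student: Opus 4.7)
The plan is to reduce the theorem to the equality
$$\dim_{\C} \mathcal O_n/df(\Theta_X^T) = \mu(f) + \mu(\phi,f) + \mu(X,0),$$
which, via the short exact sequence
$$0\to df(\Theta_X)/df(\Theta_X^T)\to \mathcal O_n/df(\Theta_X^T)\to \mathcal O_n/df(\Theta_X)\to 0,$$
is equivalent to the statement of the theorem. The hypothesis $\mu_{BR}(f,X)<\infty$ together with Lemma~\ref{csr} ensures that the dimension on the left is finite, and the explicit description $df(\Theta_X^T)=\phi\,Jf+J(f,\phi)$ derived just before the theorem is the starting point.

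To split $\mathcal O_n/df(\Theta_X^T)$, I would use the short exact sequence
$$0\to\frac{\langle\phi\rangle+J(f,\phi)}{df(\Theta_X^T)}\to\frac{\mathcal O_n}{df(\Theta_X^T)}\to\frac{\mathcal O_n}{\langle\phi,J(f,\phi)\rangle}\to 0.$$
The rightmost quotient is exactly the one appearing in the L\^e--Greuel formula for the ICIS $(\phi,f)$ (which is an ICIS because $\mu_{BR}(f,X)<\infty$, as in the proof of Lemma~\ref{csr}), and so has dimension $\mu(X,0)+\mu(\phi,f)$. For the leftmost term, since $J(f,\phi)\subset df(\Theta_X^T)$ we may rewrite it as $\langle\phi\rangle/(\langle\phi\rangle\cap df(\Theta_X^T))$; multiplication by $\phi$ (a non-zero-divisor, as $\mathcal O_n$ is a domain) then gives an isomorphism with $\mathcal O_n/(df(\Theta_X^T):\phi)$, and a direct computation yields
$$df(\Theta_X^T):\phi \;=\; Jf+(J(f,\phi):\phi).$$

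The technical core is to prove $(J(f,\phi):\phi)\subseteq Jf$, which would make the leftmost dimension equal $\mu(f)$ and close the argument. I plan to establish the stronger statement that $\phi$ is a non-zero-divisor on $\mathcal O_n/J(f,\phi)$. Since $J(f,\phi)$ is the ideal of $2\times 2$ minors of the $2\times n$ Jacobian matrix of $(f,\phi)$, the Macaulay bound for determinantal ideals gives $\operatorname{ht} J(f,\phi)\le n-1$, while the identity $V(J(f,\phi))\cap V(\phi)=\{0\}$ that makes the L\^e--Greuel colength finite, combined with the proper intersection inequality in $\C^n$, forces $\operatorname{ht} J(f,\phi)\ge n-1$. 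Thus $J(f,\phi)$ is a perfect ideal of codimension $n-1$, and by the Eagon--Northcott theorem $\mathcal O_n/J(f,\phi)$ is Cohen--Macaulay of dimension $1$. Consequently $J(f,\phi)$ has no embedded primes, and each of its minimal primes cuts out a $1$-dimensional component of $V(J(f,\phi))$ that cannot be contained in $V(\phi)$; hence $\phi$ lies outside every associated prime of $J(f,\phi)$, as required.

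The main obstacle is precisely this Cohen--Macaulay/non-zero-divisor step: it is the ingredient that goes beyond the weighted homogeneous case of \cite{Orefice}, where the Euler relation $\phi\in J\phi$ makes the torsion of $\mathcal O_n/J(f,\phi)$ vanish directly. A shortcut via a single application of Lemma~\ref{7.8} seems blocked because the module $\phi\,Jf/J(f,\phi)$ is not cyclic in general (the $\phi_{x_i}$ all vanish at $0$), so $df(\Theta_X^T)$ cannot be realised as $I_2(A)+\langle u_1\rangle$ for any $2\times n$ matrix over $\mathcal O_n$; the short-exact-sequence and determinantal-ideal approach sketched above therefore appears to be the cleanest route.
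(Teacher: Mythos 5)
Your proposal is correct, and it reaches the paper's key intermediate identity $\dim_{\C}\mathcal O_n/df(\Theta_X^T)=\mu(f)+\mu(\phi,f)+\mu(X,0)$ by a genuinely different route. The paper passes to $\mathcal O_{n+1}$ via the auxiliary functions $H(x,t)=e^t\phi(x)$ and $g(x,t)=f(x)+t^2$, applies Lemma \ref{7.8} to the $2\times(n+1)$ Jacobian matrix of $(g,H)$ with $v=(0,\dots,0,1)$, and identifies $I$, $I'$, $I''$ with $df(\Theta_X^T)$, $Jf$ and $\langle\phi\rangle+J(f,\phi)$ respectively; the splitting of colengths is then delegated entirely to the quoted lemma. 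You instead stay in $\mathcal O_n$ and prove the same additivity by hand: the exact sequence with quotient $\mathcal O_n/(\langle\phi\rangle+J(f,\phi))$, the identification of the kernel with $\mathcal O_n/(df(\Theta_X^T):\phi)$, the colon computation $(df(\Theta_X^T):\phi)=Jf+(J(f,\phi):\phi)$, and finally $(J(f,\phi):\phi)=J(f,\phi)\subseteq Jf$ because $\mathcal O_n/J(f,\phi)$ is a one-dimensional determinantal (hence Cohen--Macaulay, unmixed) ring none of whose components lies in $V(\phi)$ --- the last point being exactly the argument the paper itself uses later, in the proofs of Theorem \ref{independe de f} and Theorem \ref{LC(X)}. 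In effect you have unpacked the content of Lemma \ref{7.8} in this special case; what your version buys is transparency about where Cohen--Macaulayness of $J(f,\phi)$ enters and the avoidance of the somewhat ad hoc extra variable, at the price of more bookkeeping. One small correction to your closing remark: the single application of Lemma \ref{7.8} is blocked only if one insists on a matrix over $\mathcal O_n$; the paper's point is precisely that adding the variable $t$ and replacing $\phi$, $f$ by $e^t\phi$, $f+t^2$ realises $df(\Theta_X^T)$ as an ideal of the form $I_2(A)+\langle u_1\rangle$ over $\mathcal O_{n+1}$, so the shortcut does exist, just one dimension up.
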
  

\begin{proof}
We consider the following function germs
\begin{align*}
H\colon(\C^{n}\times\C,0)&\to (\C,0),&g\colon(\C^{n}\times\C,0)&\to(\C,0)\\
(x,t)&\mapsto e^{t}\phi(x)&(x,t)&\mapsto f(x)+t^{2}
\end{align*}

 In order to apply Lemma \ref{7.8}, let  $$A=\begin{pmatrix}
\tfrac{\partial g}{\partial x_{1}}&...&\tfrac{\partial g}{\partial x_{n}}&\tfrac{\partial g}{\partial t}\vspace{0.1cm}\\
\tfrac{\partial H}{\partial x_{1}}&...&\tfrac{\partial H}{\partial x_{n}}&\tfrac{\partial H}{\partial t}\end{pmatrix}=\begin{pmatrix}
\tfrac{\partial f}{\partial x_{1}}&...&\tfrac{\partial f}{\partial x_{n}}&2t\vspace{0.1cm}\\
e^{t}\tfrac{\partial \phi}{\partial x_{1}}&...&e^{t}\tfrac{\partial \phi}{\partial x_{n}}&e^{t}\phi\end{pmatrix}$$  and $$v=(0,...,0,1)\in\C^{n+1},$$ then 
\begin{align*}
I' & =I_{1}(\widehat{A})+\langle u_{1}\rangle= Jf+\langle t\rangle,\\
I'' & =I_{2}(A)+\langle u_{1},u_{2}\rangle=\langle e^{t}(\tfrac{\partial f}{\partial x_{i}}\tfrac{\partial \phi}{\partial x_{j}}-\tfrac{\partial f}{\partial x_{j}}\tfrac{\partial \phi}{\partial x_{i}}), e^{t}(\tfrac{\partial f}{\partial x_{i}}\phi-2t\tfrac{\partial \phi}{\partial x_{i}}),2t,e^{t}\phi\rangle,\\
I & =I_{2}(A)+\langle u_{1}\rangle=\langle e^{t}(\tfrac{\partial f}{\partial x_{i}}\tfrac{\partial \phi}{\partial x_{j}}-\tfrac{\partial f}{\partial x_{j}}\tfrac{\partial \phi}{\partial x_{i}}), e^{t}(\tfrac{\partial f}{\partial x_{i}}\phi-2t\tfrac{\partial \phi}{\partial x_{i}}),2t\rangle.
\end{align*}

 By Lemma \ref{csr}, 
 $$\dim_{\C}\frac{{\mathcal O_{n+1}}}{I}=\dim_{\C}\frac{{\mathcal O_{n}}}{\displaystyle\langle\phi\tfrac{\partial f}{\partial x_{i}}\displaystyle\rangle+J(f,\phi)}=\dim_{\C}\frac{{\mathcal O_{n}}}{df(\Theta_{X}^{T})}<\infty,
 $$ 
 so we can use Lemma \ref{7.8}: 
 $$\dim_{\C}\left(\frac{{\mathcal O_{n+1}}}{I}\right)=\dim_{\C}\left(\frac{{\mathcal O_{n+1}}}{I'}\right)+\dim_{\C}\left(\frac{{\mathcal O_{n+1}}}{I''}\right).$$ 

On the other hand,
\begin{align*}
\dim_{\C}\frac{{\mathcal O_{n+1}}}{I'}&=\dim_{\C}\frac{{\mathcal O_{n}}}{Jf}=\mu(f),\\
\dim_{\C}\frac{{\mathcal O_{n+1}}}{I''}&=\dim_{\C}\frac{{\mathcal O_{n}}}{\langle \phi\rangle+ J(f,\phi)}=\mu(\phi
)+\mu(\phi, f),
\end{align*}
by the Lê-Greuel formula \cite{greuel}. Moreover,
$$
\mu_{BR}(f,X)=\dim_{\C}\frac{{\mathcal O_{n}}}{df(\Theta_{X}^{T})}-\dim_{\C}\frac{df(\Theta_{X})}{df(\Theta_{X}^{T})},$$ which gives   
$$\mu_{BR}(f,X)=\mu(f)+\mu(\phi, f)+\mu(X,0)-\dim_{\C}\frac{df(\Theta_{X})}{df(\Theta_{X}^{T})}.$$
\end{proof}

\section{The Tjurina number}

In this section we present a characterization of the Tjurina number of an isolated hypersurface singularity $(X,0)$ in terms of the tangent vectors fields to $(X,0)$.


\begin{lem}\label{lema1}
	
	Let $\phi\colon(\C^{n},0)\to(\C,0)$ be an analytic germ with isolated singularity and let $X=\phi^{-1}(0)$. Assume that $\eta\in\Theta_X$ and $d\phi(\eta)=\lambda\phi$, for some $\lambda\in \mathcal O_n$. Then $\eta\in \Theta_{X}^{T}$ if and only if $\lambda\in J\phi$.
	
\end{lem}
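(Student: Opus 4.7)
The plan is to characterize $\Theta_X^T$ directly from its generators and then use the fact that, since $\phi$ has an isolated singularity, the partial derivatives $\partial\phi/\partial x_1,\dots,\partial\phi/\partial x_n$ form a regular sequence in the Cohen--Macaulay local ring $\mathcal O_n$.

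First note that $\lambda$ is uniquely determined by $\eta$: if $\lambda\phi=\lambda'\phi$ then $(\lambda-\lambda')\phi=0$, and since $\mathcal O_n$ is a domain this forces $\lambda=\lambda'$. So the statement "$\lambda\in J\phi$" is unambiguous. For the forward implication, I would apply $d\phi$ to an arbitrary element
$$\eta=\sum_i a_i\phi\frac{\partial}{\partial x_i}+\sum_{j<k}b_{jk}\left(\frac{\partial\phi}{\partial x_j}\frac{\partial}{\partial x_k}-\frac{\partial\phi}{\partial x_k}\frac{\partial}{\partial x_j}\right)\in\Theta_X^T$$
and observe that the Koszul-type terms kill each other under $d\phi$, leaving $d\phi(\eta)=\phi\sum_i a_i\,\partial\phi/\partial x_i$. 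Hence $\lambda=\sum_i a_i\,\partial\phi/\partial x_i\in J\phi$.

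For the converse, suppose $\eta\in\Theta_X$ and $d\phi(\eta)=\lambda\phi$ with $\lambda=\sum_i a_i\,\partial\phi/\partial x_i\in J\phi$. Form the trivial vector field $\eta_0=\sum_i a_i\phi\,\partial/\partial x_i\in\Theta_X^T$; by the computation above $d\phi(\eta_0)=\lambda\phi=d\phi(\eta)$, so $\xi:=\eta-\eta_0$ satisfies $d\phi(\xi)=0$. Writing $\xi=\sum_i c_i\,\partial/\partial x_i$, this says $(c_1,\dots,c_n)$ is a syzygy of $(\partial\phi/\partial x_1,\dots,\partial\phi/\partial x_n)$. The whole thing now reduces to showing that every such syzygy is an $\mathcal O_n$-linear combination of the Koszul syzygies $\partial\phi/\partial x_k\cdot e_j-\partial\phi/\partial x_j\cdot e_k$, for this is exactly what guarantees that $\xi$ is an $\mathcal O_n$-combination of the generators $\partial\phi/\partial x_j\,\partial/\partial x_k-\partial\phi/\partial x_k\,\partial/\partial x_j$ of $\Theta_X^T$, hence $\eta=\eta_0+\xi\in\Theta_X^T$.

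The key non-routine step is therefore the syzygy assertion, and this is where the hypothesis of isolated singularity enters. Because $V(J\phi)=\{0\}$, the partial derivatives form a system of parameters in the $n$-dimensional Cohen--Macaulay ring $\mathcal O_n$, and any system of parameters in a Cohen--Macaulay local ring is a regular sequence. Exactness of the Koszul complex on a regular sequence then gives precisely that the first syzygy module is generated by the Koszul relations, which finishes the proof. The only thing to be careful about is making sure $\lambda$ is well-defined before invoking that $\lambda\in J\phi$, and double-checking signs in the Koszul piece when translating between syzygies and vector fields.
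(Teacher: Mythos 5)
Your proof is correct and follows essentially the same route as the paper: in the converse direction the paper also writes $\lambda=d\phi(\xi)$ for some $\xi\in\Theta_n$, observes $d\phi(\eta-\phi\xi)=0$, and concludes from the regularity of $\partial\phi/\partial x_1,\dots,\partial\phi/\partial x_n$ (exactness of the Koszul complex) that $\eta-\phi\xi$ lies in the span of the trivial Hamiltonian fields, hence $\eta\in\Theta_X^T$. Your extra remark that $\lambda$ is uniquely determined because $\mathcal O_n$ is a domain is a small but legitimate point of care that the paper leaves implicit.
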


\begin{proof} If $\eta\in \Theta_{X}^{T}$, obviously $\lambda\in J\phi$. Conversely, if $\lambda\in J\phi$ there exists $\xi\in \Theta_n$ such that $d\phi(\xi)=\lambda$. Thus, $d\phi(\eta-\phi\xi)=0$. Since $\phi$ has isolated singularity, $\partial\phi/\partial x_1,\dots,\partial\phi/\partial x_n$ form a regular sequence in $\mathcal O_n$. Therefore, the module of syzygies of $\partial\phi/\partial x_1,\dots,\partial\phi/\partial x_n$ is generated by the trivial relations. We have that $\eta-\phi\xi\in \Theta_{X}^{T}$. But $\phi\xi$ is also in $\Theta_{X}^{T}$, so $\eta\in\Theta_{X}^{T}$.
	
\end{proof}

The following theorem shows that, surprisingly, $\dim_{\C} df(\Theta_{X})/df(\Theta_{X}^{T})$ does not depend on the function germ $f\in\mathcal{O}_{n}$, provided it is finitely $\mathcal{R}_{X}$-determined. As a consequence, we show that this dimension is equal to the Tjurina number of $(X,0)$.

\begin{teo}\label{independe de f}
Let $(X,0)\subset(\C^n,0)$ be an isolated hypersurface singularity
and let $f\in\mathcal O_n$ be finitely $\mathcal{R}_{X}$-determined. Let $E:\Theta_{X}\to df(\Theta_{X})$ be the evaluation map given by $E(\xi)=df(\xi).$ Then $E$ induces an isomorphism
$$\overline{E}:\frac{\Theta_{X}}{ \Theta_{X}^{T}}\to\frac{df(\Theta_{X})}{df(\Theta_{X}^{T})}.$$
\end{teo}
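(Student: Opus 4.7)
The plan is to prove the three properties of $\overline{E}$: well-definedness, surjectivity, and injectivity.

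\emph{Well-definedness and surjectivity.} These are immediate. By the definition of $E$, one has $E(\Theta_X^T)\subseteq df(\Theta_X^T)$, so $\overline{E}$ descends to the quotient. For surjectivity, every element of $df(\Theta_X)$ has the form $\sum g_i\,df(\xi_i)=df(\sum g_i\xi_i)$ with $\xi_i\in\Theta_X$, $g_i\in\mathcal O_n$; since $\Theta_X$ is an $\mathcal O_n$-module, $\sum g_i\xi_i\in\Theta_X$, so $E$ itself is already surjective.

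\emph{Reduction of injectivity.} A snake lemma argument applied to the commutative diagram
\[
\begin{array}{ccccccccc}
0 & \to & \Theta_X^T & \to & \Theta_X & \to & \Theta_X/\Theta_X^T & \to & 0\\
& & \downarrow & & \downarrow & & \downarrow \overline{E} & & \\
0 & \to & df(\Theta_X^T) & \to & df(\Theta_X) & \to & df(\Theta_X)/df(\Theta_X^T) & \to & 0
\end{array}
\]
(with vertical maps given by $df$) yields $\ker\overline{E}\cong K/K^T$, where $K=\Theta_X\cap\ker(df)$ and $K^T=\Theta_X^T\cap\ker(df)$. Therefore the injectivity of $\overline{E}$ is equivalent to the inclusion $K\subseteq\Theta_X^T$, i.e., every $\zeta\in\Theta_X$ with $df(\zeta)=0$ must already lie in $\Theta_X^T$.

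\emph{The main step.} Fix $\zeta=\sum_i c_i\,\partial/\partial x_i\in K$ and write $d\phi(\zeta)=\lambda\phi$. Because $\mu_{BR}(f,X)<\infty$ and $df(\Theta_X)\subseteq Jf$, $f$ has isolated singularity at the origin, so $\partial f/\partial x_1,\dots,\partial f/\partial x_n$ is a regular sequence in $\mathcal O_n$. The Koszul theorem applied to $\sum_i c_i\,\partial f/\partial x_i=0$ forces $c_i=\sum_k b_{ik}\,\partial f/\partial x_k$ for some antisymmetric family $(b_{ik})$. Substituting into $\sum_i c_i\,\partial\phi/\partial x_i=\lambda\phi$ produces
\[
\lambda\phi=\sum_{i<k}b_{ik}\,m_{ik},
\]
where the $m_{ik}=\partial f/\partial x_k\,\partial\phi/\partial x_i-\partial f/\partial x_i\,\partial\phi/\partial x_k$ generate $J(f,\phi)$. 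By Lemma \ref{lema1}, the conclusion $\zeta\in\Theta_X^T$ is equivalent to $\lambda\in J\phi$, and hence the whole injectivity statement comes down to the inclusion $(J(f,\phi):\phi)\subseteq J\phi$.

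\emph{Expected main obstacle.} The crux is precisely this last inclusion. The opposite containment $J(f,\phi)\subseteq J\phi$ is automatic, since each $m_{ik}$ is already an $\mathcal O_n$-linear combination of the $\partial\phi/\partial x_j$, but upgrading it to $(J(f,\phi):\phi)\subseteq J\phi$ must exploit the finiteness hypothesis essentially: this finiteness forces $(\phi,f)$ to define an ICIS at the origin, in addition to the isolated singularity of $f$. I expect the argument to combine the cyclic syzygies $m_{ik}\,\partial\phi/\partial x_l+m_{kl}\,\partial\phi/\partial x_i+m_{li}\,\partial\phi/\partial x_k=0$ among the minors with the Cohen--Macaulay property of $\mathcal O_X=\mathcal O_n/(\phi)$ and the $\mathfrak m_X$-primariness of $J\phi\cdot\mathcal O_X$, so as to extract an explicit representation of $\lambda$ as an $\mathcal O_n$-combination of the $\partial\phi/\partial x_j$.
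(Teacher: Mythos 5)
Your reduction is correct and coincides exactly with the paper's: well-definedness and surjectivity are immediate, the snake-lemma identification of $\ker\overline{E}$ with $K/K^T$ is right, and the Koszul argument using the regular sequence $\partial f/\partial x_1,\dots,\partial f/\partial x_n$ correctly shows that any $\zeta\in\Theta_X$ with $df(\zeta)=0$ has $c_i=\sum_k b_{ik}\partial f/\partial x_k$ antisymmetric, whence $\lambda\phi\in J(f,\phi)$; combined with Lemma \ref{lema1}, everything then comes down to the inclusion $(J(f,\phi):\phi)\subseteq J\phi$. But at that point you stop. The ``expected main obstacle'' paragraph is a conjecture about how that inclusion might be proved, not a proof, and the route you sketch (cyclic syzygies among the minors plus Cohen--Macaulayness of $\mathcal O_n/\langle\phi\rangle$ and primariness of $J\phi$ there) is not obviously workable: nothing in it explains where the hypothesis that $(\phi,f)$ is an ICIS --- as opposed to $f$ and $\phi$ merely having isolated singularities separately --- would enter, and that hypothesis is essential. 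So this is a genuine gap, and it sits precisely at the heart of the theorem.

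The paper closes this gap geometrically rather than by syzygy manipulation. Since $\mu_{BR}(f,X)<\infty$ forces $(\phi,f)$ to be an ICIS, $V(J(\phi,f))$ is a curve; because $J(\phi,f)$ is the ideal of maximal minors of an $n\times 2$ matrix whose rank equals $1$ off the origin, this curve is a one-dimensional isolated determinantal singularity and hence \emph{reduced}, so $J(\phi,f)$ is a radical ideal. From $\lambda\phi\in J(\phi,f)$ one gets $V(J(\phi,f))\subseteq V(\lambda)\cup V(\phi)$, and each irreducible component lies in one of the two sets. No component can lie in $V(\phi)$, since that would give $\dim_\C\mathcal O_n/(\langle\phi\rangle+J(f,\phi))=\infty$, contradicting the L\^e--Greuel formula for the ICIS. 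Hence $V(J(\phi,f))\subseteq V(\lambda)$, so $\lambda\in\sqrt{J(\phi,f)}=J(\phi,f)\subseteq J\phi$, which is the desired inclusion. The reducedness of the determinantal curve (and the component-by-component analysis it permits) is the key input missing from your proposal; if you want to complete your argument, this is the fact you need to import.
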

\begin{proof}
Obviously, $E$ is an epimorphism and $E( \Theta_{X}^{T})=df(\Theta_{X}^{T})$. We only need to show that $\ker E\subset  \Theta_{X}^{T}$. That is, we have to show that if $\eta\in\Theta_X$ such that $df(\eta)\equiv 0$, then $\eta\in\Theta_{X}^{T}$.

We have $\mu_{BR}(f,X)<\infty$. Let $\phi\colon(\C^{n},0)\to(\C,0)$ be an analytic germ with isolated singularity such that $X=\phi^{-1}(0)$. By \cite[Proposition 2.8]{carlesruas}, $(\phi,f)\colon(\C^{n},0)\to(\C^2,0)$ defines an ICIS. Therefore, the variety determined by $J(\phi,f)$, $V(J(\phi,f))$, has dimension 1 (see \cite{looijenga}). Moreover, since $f$ and $\phi$ have isolated singularity, the rank of $(J(\phi,f)(x))$ is equal to $1$ for $x\neq 0$ in a neighborhood of the origin. Hence, $V(J(\phi,f))$  is an isolated determinantal singularity (IDS) of dimension 1 and, therefore, it is reduced, by \cite[lemma 2.5]{not2}. 

Suppose that $\eta\notin\Theta_{X}^{T}$. By Lemma \ref{lema1}, $d\phi(\eta)=\lambda\phi$, for some $\lambda\notin J\phi$. On the other hand, $f$ has also isolated singularity, so $\partial f/\partial x_1,\dots,\partial f/\partial x_n$ form a regular sequence in $\mathcal O_n$. Therefore, the module of syzygies of $\partial f/\partial x_1,\dots,\partial f/\partial x_n$ is generated by the trivial relations. Since $df(\eta)\equiv 0$, we have that
$\eta \in\displaystyle\langle\tfrac{\partial f}{\partial x_{i}}\tfrac{\partial }{\partial x_{j}}-\tfrac{\partial f}{\partial x_{j}}\tfrac{\partial }{\partial x_{i}} \displaystyle\rangle$.

This implies that $\lambda\phi=d\phi(\eta)\in J(\phi,f)$ and hence $V(J(\phi,f))\subset V(\lambda\phi)=V(\lambda)\cup V(\phi)$.
Let $C_1,\dots,C_k$ be the irreducible components of $V(J(\phi,f))$. Since 
$V(J(\phi,f))$ is reduced of dimension 1, each $C_{i}$ is an irreducible curve in $(\C^{n},0)$. In particular, either $C_i\subset V(\lambda)$ or $C_i\subset V(\phi)$, for each $i=1,\dots,k$.

If $C_{i}\not\subset V(\phi)$ for all $i=1,...,k$, then $C_{i}\subset V(\lambda)$ for all $i=1,...,k$, hence $V(J(\phi,f))\subset V(\lambda)$, therefore $\lambda\in\sqrt{\lambda}\subset\sqrt{J(\phi,f)}=J(\phi,f)\subset J\phi$, which is a contradiction with the way $\lambda$ was chosen. Otherwise, if $C_{i}\subset V(\phi)$ for some $i$, then $C_{i}\subset V(\langle\phi\rangle+J(f,\phi))$. This gives $\dim_\C \mathcal O_n/(\langle\phi\rangle+J(f,\phi))=\infty$, in contradiction with the Lê-Greuel formula for the ICIS $V(\phi,f)$.

\end{proof}


We are ready now to characterize the Tjurina number in terms of the tangent vector fields to the hypersurface $(X,0)$.

\begin{teo}\label{tjurina}
Let $(X,0)\subset(\C^n,0)$ be an isolated hypersurface singularity and let $p:\C^{n}\to\C$ be a non-zero linear function.
Then $$\tau(X,0)=\dim_{\C}\frac{dp(\Theta_{X})}{dp(\Theta_{X}^{T})}.$$
\end{teo}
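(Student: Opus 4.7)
The plan is to combine Theorem \ref{independe de f} with an intrinsic computation of $\dim_\C \Theta_X/\Theta_X^T$. Since $(X,0)$ has isolated singularity, a suitably generic non-zero linear form $p$ is finitely $\mathcal{R}_X$-determined: its zero hyperplane is transverse to $X$ off the origin, so $(\phi,p)$ defines an ICIS and $p$ has a stratified isolated critical point on $X$. For such $p$, Theorem \ref{independe de f} yields the isomorphism
$$\frac{\Theta_X}{\Theta_X^T}\cong \frac{dp(\Theta_X)}{dp(\Theta_X^T)},$$
so it suffices to prove the intrinsic identity $\dim_\C \Theta_X/\Theta_X^T=\tau(X,0)$.

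To compute this dimension, I would introduce the map $\Psi\colon \Theta_X\to \mathcal{O}_n/J\phi$ sending $\eta$ to the class of the unique $\lambda\in\mathcal{O}_n$ with $d\phi(\eta)=\lambda\phi$; this is well-defined because $\phi$ is a non-zero-divisor in $\mathcal{O}_n$ (the hypersurface is reduced, being an isolated singularity). Lemma \ref{lema1} identifies $\ker\Psi=\Theta_X^T$. For the image, note that $d\phi(\Theta_n)=J\phi$ as an ideal, so the equation $\lambda\phi=d\phi(\eta)$ admits a solution $\eta\in\Theta_n$ exactly when $\lambda\phi\in J\phi$; any such $\eta$ automatically lies in $\Theta_X$ since $d\phi(\eta)\in\langle\phi\rangle=I(X)$. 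Therefore $\Psi$ induces an isomorphism
$$\frac{\Theta_X}{\Theta_X^T}\cong \frac{(J\phi:\phi)}{J\phi}.$$

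To finish, I would analyze multiplication by $\phi$ as a $\C$-linear endomorphism of the finite-dimensional vector space $\mathcal{O}_n/J\phi$ (whose dimension is $\mu(X,0)$ because $(X,0)$ is isolated). Its kernel is $(J\phi:\phi)/J\phi$ and its cokernel is $\mathcal{O}_n/(\langle\phi\rangle+J\phi)$, so these two spaces have the same $\C$-dimension, giving
$$\dim_\C\frac{(J\phi:\phi)}{J\phi}=\dim_\C\frac{\mathcal{O}_n}{\langle\phi\rangle+J\phi}=\tau(X,0),$$
from which the theorem follows. The delicate point I anticipate is the choice of linear form: for a non-generic $p$ (for instance one whose hyperplane contains an irreducible component of $X$) the quotient $dp(\Theta_X)/dp(\Theta_X^T)$ can collapse, so it is essential that Theorem \ref{independe de f} applies to the chosen $p$. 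Once this is secured, everything else is bookkeeping around Lemma \ref{lema1} and the multiplication-by-$\phi$ endomorphism.
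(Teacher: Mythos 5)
Your argument is correct, and it takes a genuinely different route from the paper's. The paper does not use Theorem \ref{independe de f} here at all: after reducing to $a_1\neq 0$, it fits $\mathcal{O}_n/dp(\Theta_{X}^{T})$ into a four-term exact sequence whose middle map is multiplication by $\overline{\partial\phi/\partial x_1}$, reads off $\dim_{\C}\ker(\alpha)=\tau(X,0)$ from the dimension count, and then identifies $\ker(\alpha)$ with $dp(\Theta_{X})/dp(\Theta_{X}^{T})$ by an explicit syzygy manipulation with the generators of $J(p,\phi)$. You instead prove the intrinsic statement $\Theta_{X}/\Theta_{X}^{T}\cong (J\phi:\phi)/J\phi$ via Lemma \ref{lema1}, get its dimension from rank--nullity applied to multiplication by $\phi$ on the $\mu(X,0)$-dimensional space $\mathcal{O}_n/J\phi$, and transport the answer with Theorem \ref{independe de f}. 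Your route is cleaner, yields the first equality of Corollary \ref{cor tjurina} directly, and avoids the coordinate computations; the cost is that you need $p$ to be finitely $\mathcal{R}_X$-determined, so you only obtain the statement for generic linear forms. That caveat is well taken, and in fact necessary: for $\phi=x^2-y^2$ and $p=x+y$ one checks that $dp(\Theta_{X})=dp(\Theta_{X}^{T})=\langle x+y\rangle$ while $\tau(X,0)=1$, so the theorem as literally stated fails for non-generic $p$; correspondingly, the paper's own dimension count through the exact sequence tacitly assumes $\dim_{\C}\mathcal{O}_n/dp(\Theta_{X}^{T})<\infty$, which by Lemma \ref{csr} is again the condition that $p$ be finitely $\mathcal{R}_X$-determined. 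So your genericity hypothesis is not a weakness of your proof but the correct hypothesis for the statement, and it is all that is needed for the application in Corollary \ref{cor tjurina}.
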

\begin{proof}

Since $p$ is a non-zero linear function, we may suppose that $p(x_{1},...,x_{n})=a_{1}x_{1}+\dots+a_{n}x_{n}$, with $a_{1}\neq 0$. We consider the following sequence
$$
0\longrightarrow \ker(\alpha)\stackrel{i}{\longrightarrow}\frac{\mathcal{O}_{n}}{dp(\Theta_{X}^{T})}\stackrel{\alpha}{\longrightarrow}\frac{\mathcal{O}_{n}}{dp(\Theta_{X}^{T})}\stackrel{\pi}{\longrightarrow}\frac{\mathcal{O}_{n}}{\langle\phi\rangle+J\phi}\longrightarrow 0$$
where  $\alpha$ is given by multiplication by $\overline{\partial \phi/\partial x_1}$, $i$ is the inclusion and $\pi$ is the projection. 

The sequence above is exact. Indeed, $dp(\Theta_{X}^{T})=\langle\phi\rangle+J(p,\phi)\subset\langle\phi\rangle+J\phi$. Moreover, 
$$\ker(\pi)=\frac{\langle\phi\rangle+J\phi}{\langle\phi\rangle+J(p,\phi)},$$ 
and   
$$\operatorname{Im}(\alpha)=\frac{\displaystyle\langle\tfrac{\partial\phi}{\partial x_{1}}\rangle\displaystyle+\langle \phi\rangle+J(p,\phi)}{\langle \phi\rangle+J(p,\phi)},$$
so $\ker(\pi)\subset \operatorname{Im}(\alpha)$. The opposite inclusion follows because $\pi\circ\alpha=0$. 
Hence, 
$$\dim_{\C}\ker(\alpha)=\dim_{\C}\frac{{\mathcal O_{n}}}{\langle\phi\rangle+J\phi}=\tau(X,0).$$
We claim that 
$$
\ker(\alpha)=\frac{dp(\Theta_{X})}{dp(\Theta_{X}^{T})}=\frac{dp(\Theta_{X})}{\langle \phi\rangle+J(p,\phi)}.
$$

In fact, given $g\in dp(\Theta_{X})$ there exists $\xi\in\Theta_{X}$ such that $g=dp(\xi)$.
We have $g=d\phi(\xi)=\displaystyle\sum_{i=1}^{n} \xi_{i}\tfrac{\partial\phi}{\partial x_{i}}=\lambda\phi$, for some $\lambda\in{\mathcal O}_{n}$, and $dp(\xi)=\displaystyle\sum_{i=1}^{n} a_{i}\xi_{i}$. Therefore,
\begin{align*}
\frac{\partial\phi}{\partial{x_{1}}} g&=\frac{\partial\phi}{\partial{x_{1}}}\displaystyle\sum_{i=1}^{n}a_{i}\xi_{i}
=a_{1}\left(\lambda\phi-\sum_{i=2}^{n}\xi_{i}\frac{\partial\phi}{\partial x_{i}}\right)+\sum_{j=2}^{n} a_{j}\xi_{j}\frac{\partial\phi}{\partial x_{1}}\\
               &=a_{1}\lambda\phi+\sum_{j=2}^{n}\xi_{j}\left(a_{j}\frac{\partial\phi}{\partial x_{1}}-a_{1}\frac{\partial\phi}{\partial x_{j}}\right)\in \langle \phi\rangle+J(p,\phi).
\end{align*}
This shows the inclusion  
$$\frac{dp(\Theta_{X})}{\langle \phi\rangle+J(p,\phi)}\subset \ker(\alpha).$$

Conversely, let $g\in \mathcal O_n$ be such that $g\frac{\partial\phi}{\partial x_{1}}\in \langle \phi\rangle+ J(p,\phi)$. There exist $b,b_{ij}\in {\mathcal O_{n}}$ such that
\begin{align*}
\frac{\partial\phi}{\partial{x_{1}}} g&=b\phi+\sum_{j<i}b_{ij}\left( a_{i}\frac{\partial\phi}{\partial x_{j}}-a_{j}\frac{\partial\phi}{\partial x_{i}}\right)\\
                                     &=b\phi+\frac{\partial\phi}{\partial x_{1}}(-b_{12}a_{2}-\dots-b_{1n}a_{n})+\frac{\partial\phi}{\partial x_{2}}( b_{12}a_{1}-b_{23}a_{3}-\dots-b_{2n}a_{n})\\
                                     &\quad+\dots+\frac{\partial\phi}{\partial{x_{n}}}( b_{1n}a_{1}+b_{2n}a_{2}+\dots+b_{(n-2)n}a_{n-2}+b_{(n-1)n}a_{n-1})\\
                                     &=b\phi+\sum_{j=1}^{n}\frac{\partial\phi}{\partial x_{j}}\left(\sum_{\substack{k=1,\\ k \neq j}}^{n}b'_{jk}a_{k}\right),
\end{align*}
 where $b'_{jk}=-b_{jk}$, if $j<k$ and $b'_{jk}=b_{kj}$, if $k<j$. Thus,
   $$b\phi=( g-\sum_{k=2}^{n}b'_{1k}a_{k})\frac{\partial\phi}{\partial x_{1}}-\sum_{j=1}^{n}\frac{\partial\phi}{\partial x_{j}}\left(\sum_{\substack{k=1,\\ k \neq j}}^{n}b'_{jk}a_{k}\right).$$
 It follows that 
 $$\xi=( g-\sum_{k=2}^{n}b'_{1k}a_{k})\frac{\partial}{\partial x_{1}}-\sum_{j=1}^{n}\frac{\partial}{\partial x_{j}}\left(\sum_{\substack{k=1,\\ k \neq j}}^{n}b'_{jk}a_{k}\right)\in\Theta_{X},$$
   and a simple calculation shows that $dp\left(\tfrac{1}{a_{1}}\xi\right)=g$. This gives the opposite inclusion 
   $$\ker(\alpha)\subset\frac{dp(\Theta_{X})}{\langle \phi\rangle + J(p,\phi)}.$$
\end{proof}

The following corollary is an immediate consequence of  Theorems \ref{independe de f} and \ref{tjurina}.

\begin{cor}\label{cor tjurina}
Let $(X,0)\subset(\C^n,0)$ be an isolated hypersurface singularity and let $f\in\mathcal{O}_{n}$ be finitely $\mathcal{R}_{X}$-determined. Then $$\dim_{\C}\frac{\Theta_{X}}{\Theta_{X}^{T}}=\dim_{\C}\frac{df(\Theta_{X})}{df(\Theta_{X}^{T})}=\tau(X,0).$$
\end{cor}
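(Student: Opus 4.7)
The plan is to derive the corollary as a direct consequence of Theorems \ref{independe de f} and \ref{tjurina}. First I would invoke Theorem \ref{independe de f}: for the given finitely $\mathcal{R}_X$-determined $f$, the evaluation map induces an isomorphism $\Theta_X/\Theta_X^T \cong df(\Theta_X)/df(\Theta_X^T)$, which immediately gives the first equality $\dim_\C \Theta_X/\Theta_X^T = \dim_\C df(\Theta_X)/df(\Theta_X^T)$. The crucial observation is that the left-hand side depends only on $(X,0)$, so this shows that $\dim_\C df(\Theta_X)/df(\Theta_X^T)$ is the \emph{same} for every finitely $\mathcal{R}_X$-determined germ — it is an invariant of $(X,0)$ alone.

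To identify this common value with $\tau(X,0)$, I would exhibit one convenient finitely $\mathcal{R}_X$-determined germ for which the right-hand side can be computed explicitly. A non-zero linear form $p$ is the natural candidate, since Theorem \ref{tjurina} gives $\dim_\C dp(\Theta_X)/dp(\Theta_X^T) = \tau(X,0)$ unconditionally. Combining this identity with the first step applied to $f=p$ would yield $\tau(X,0) = \dim_\C \Theta_X/\Theta_X^T = \dim_\C df(\Theta_X)/df(\Theta_X^T)$ for the original $f$, completing the proof.

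The only step requiring care, and the main (mild) obstacle, is verifying that a linear form $p$ can indeed be chosen to be finitely $\mathcal{R}_X$-determined, since Theorem \ref{independe de f} must be applied with $f$ replaced by $p$. By the characterization of $\mathcal{R}_X$-finite determinacy invoked in the proof of Theorem \ref{independe de f} (namely \cite[Proposition 2.8]{carlesruas}), this amounts to showing that $(\phi,p)$ defines an ICIS. For a generic choice of linear $p$ the hyperplane $\{p=0\}$ is transverse to the Whitney stratification of $(X,0)$ away from the origin, so $(\phi,p)\colon(\C^n,0)\to(\C^2,0)$ has an isolated critical value at $0$ and hence defines an ICIS. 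Fixing such a $p$ and chaining the two equalities gives the desired conclusion.
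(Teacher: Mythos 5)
Your proposal is correct and is essentially the paper's own argument: the paper simply states that the corollary is an immediate consequence of Theorems \ref{independe de f} and \ref{tjurina}, exactly the chaining you describe. Your extra verification that a generic linear form $p$ is finitely $\mathcal{R}_{X}$-determined (so that Theorem \ref{independe de f} applies to $p$) is a point the paper leaves implicit, and you handle it correctly.
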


We remark that Tajima in \cite{Tajima} has proved the same result with a different approach.

\section{Main result and some applications}

Finally, we prove the main result which gives the relationship between the Milnor number and the Bruce-Roberts number of $f$ with respect to an isolated hypersurface singularity $(X,0)$.

\begin{cor}\label{principal}
Let $(X,0)\subset(\C^{n},0)$ be the an isolated hypersurface singularity defined by $\phi\colon(\C^{n},0)\to(\C,0)$ and let $f\in\mathcal O_n$ be finitely $\mathcal{R}_{X}$-determined. Then,
$$\mu_{BR}(f,X)=\mu(f)+\mu(\phi, f)+\mu(X,0)-\tau(X,0).$$
\end{cor}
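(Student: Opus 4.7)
The proof proposal here is almost trivial given the machinery assembled in the preceding two sections, so the plan is simply to chain together Theorem \ref{dfthetax} and Corollary \ref{cor tjurina}.

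First I would invoke Theorem \ref{dfthetax}, which, under the hypothesis that $\mu_{BR}(f,X)<\infty$ (equivalent to $f$ being finitely $\mathcal R_X$-determined, by Bruce-Roberts), already delivers the identity
\[
\mu_{BR}(f,X)=\mu(f)+\mu(\phi,f)+\mu(X,0)-\dim_{\C}\frac{df(\Theta_{X})}{df(\Theta_{X}^{T})}.
\]
So the only thing left to recognize is that the last correction term is exactly $\tau(X,0)$.

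Next I would apply Corollary \ref{cor tjurina}, which is the combination of Theorem \ref{independe de f} (the evaluation map $\overline{E}\colon \Theta_X/\Theta_X^T\to df(\Theta_X)/df(\Theta_X^T)$ is an isomorphism for any finitely $\mathcal R_X$-determined $f$) and Theorem \ref{tjurina} (the dimension of $dp(\Theta_X)/dp(\Theta_X^T)$ for a linear $p$ equals $\tau(X,0)$). Together these give $\dim_{\C} df(\Theta_X)/df(\Theta_X^T)=\tau(X,0)$, and substitution into the formula above yields the stated expression for $\mu_{BR}(f,X)$.

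The main obstacle has already been overcome in the body of the paper: the key analytic input is the Lê-Greuel computation hidden inside Theorem \ref{dfthetax} (via the deformation $H(x,t)=e^t\phi(x)$, $g(x,t)=f(x)+t^2$ and Lemma \ref{7.8}), together with the ICIS/reducedness argument in Theorem \ref{independe de f} that shows the correction term does not depend on $f$. Once both are in place, the corollary is a one-line substitution and requires no further work; in particular no new hypothesis on $f$ or on $(X,0)$ beyond isolated hypersurface singularity and finite $\mathcal R_X$-determinacy is needed.
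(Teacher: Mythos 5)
Your proposal is correct and follows exactly the paper's own argument: the paper likewise proves this corollary by substituting the identity $\dim_{\C} df(\Theta_X)/df(\Theta_X^T)=\tau(X,0)$ from Corollary \ref{cor tjurina} into the formula of Theorem \ref{dfthetax}. Nothing further is needed.
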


\begin{proof}
It follows from Theorem \ref{dfthetax} and Corollary \ref{cor tjurina}.
\end{proof}

The following results are direct applications of Corollary \ref{principal}.

\begin{cor}
Let $f,\phi\colon(\C^{n},0)\to(\C,0)$ be function germs with isolated singularity, and let $(X,0)$ and $(Y,0)$ be the hypersurfaces determined by $\phi$ and $f$, respectively. 
If $\mu_{BR}(f,X)<\infty$ and $\mu_{BR}(\phi,Y)<\infty$, then $$\mu_{BR}(f,X)-\mu_{BR}(\phi,Y)=\tau(Y,0)-\tau(X,0).$$  
\end{cor}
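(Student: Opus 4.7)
The plan is to apply Corollary \ref{principal} to each of the two Bruce-Roberts numbers and then simply subtract, so the main task is really to recognize which terms cancel. First, since $f$ has isolated singularity by hypothesis, we have $\mu_{BR}(f,X)<\infty$, and Corollary \ref{principal} gives
\[
\mu_{BR}(f,X)=\mu(f)+\mu(\phi,f)+\mu(X,0)-\tau(X,0).
\]
Exchanging the roles of $f$ and $\phi$, and of $X$ and $Y$, and using that $(Y,0)=f^{-1}(0)$ is an isolated hypersurface singularity so that $\mu_{BR}(\phi,Y)<\infty$ is handled by the same corollary, I get
\[
\mu_{BR}(\phi,Y)=\mu(\phi)+\mu(f,\phi)+\mu(Y,0)-\tau(Y,0).
\]

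The key observation is that three pairs of terms match up. First, by the very definition of the Milnor number of an isolated hypersurface singularity, $\mu(X,0)=\mu(\phi)$ and $\mu(Y,0)=\mu(f)$. Second, the Milnor number $\mu(\phi,f)$ of the ICIS $(\phi,f)\colon(\C^{n},0)\to(\C^{2},0)$ is symmetric in its two components: it equals $\mu(f,\phi)$, because the underlying ICIS $V(\phi,f)=V(f,\phi)$ is the same germ, and Hamm's definition via the Milnor fibration depends only on this germ together with the map, up to a permutation of coordinates in the target which does not change the rank of reduced homology of the generic fibre.

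Subtracting the two expressions and using these identifications, the four terms $\mu(f)$, $\mu(\phi)$, $\mu(\phi,f)=\mu(f,\phi)$ cancel in pairs, leaving precisely
\[
\mu_{BR}(f,X)-\mu_{BR}(\phi,Y)=-\tau(X,0)+\tau(Y,0),
\]
which is the claimed identity. The only conceivable obstacle is the symmetry $\mu(\phi,f)=\mu(f,\phi)$, but this is a standard and immediate consequence of Hamm's definition, so no genuine difficulty arises and the corollary follows purely by bookkeeping from the main result.
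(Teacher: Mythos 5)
Your proof is correct and follows essentially the same route as the paper: apply Corollary \ref{principal} to both Bruce--Roberts numbers, use the symmetry $\mu(\phi,f)=\mu(f,\phi)$ together with $\mu(X,0)=\mu(\phi)$ and $\mu(Y,0)=\mu(f)$, and subtract. The extra detail you supply on why the ICIS Milnor number is symmetric is a welcome elaboration of a step the paper states without comment.
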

\begin{proof}
The hypothesis imply that $(f,\phi)$ defines an ICIS, and $\mu(\phi,f)=\mu(f,\phi)$. By Corollary \ref{principal}, we have then $\mu_{BR}(f,X)-\mu_{BR}(\phi,Y)=\tau(Y,0)-\tau(X,0).$

\end{proof}
 
The next corollary shows that 
the Bruce-Roberts number is a topological invariant when $(X,0)$ is an isolated hypersurface singularity.

\begin{cor}\label{joao}
Let $(X,0)\subset (\C^{n},0)$ be an isolated hypersurface singularity.  Let $f,g\in\mathcal O_n$ be finitely $\mathcal{R}_{X}$-determined function germs such that $f$ is $C^{0}$-${\mathcal{R}}_{X}$-equivalent to $g$. 
Then $\mu_{BR}(f,X)=\mu_{BR}(g,X).$
\end{cor}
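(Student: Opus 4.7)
The plan is to apply Corollary \ref{principal} to both $f$ and $g$ (using the same defining equation $\phi$ of $X$) and to reduce the claim to the topological invariance of the two terms on the right-hand side that depend on the function. Subtracting the two formulas, the contributions $\mu(X,0)$ and $\tau(X,0)$ cancel, so
$$\mu_{BR}(f,X)-\mu_{BR}(g,X)=(\mu(f)-\mu(g))+(\mu(\phi,f)-\mu(\phi,g)).$$
It therefore suffices to prove $\mu(f)=\mu(g)$ and $\mu(\phi,f)=\mu(\phi,g)$.

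The first equality is the straightforward half. If $\psi$ realizes the $C^0$-$\mathcal R_X$-equivalence, then forgetting the extra condition $\psi(X)=X$ we still have a germ of ambient homeomorphism with $f=g\circ\psi$, so $f$ and $g$ are $C^0$-$\mathcal R$-equivalent as ordinary function germs. By Milnor's theorem $\mu$ equals the middle Betti number of the Milnor fibre of an isolated hypersurface singularity and is therefore a topological invariant of the germ, giving $\mu(f)=\mu(g)$.

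For $\mu(\phi,f)=\mu(\phi,g)$, observe that $\psi(X)=X$ together with $f=g\circ\psi$ forces $\psi(f^{-1}(0))=g^{-1}(0)$, so $\psi$ restricts to a germ homeomorphism of ambient pairs $(\C^n,X\cap f^{-1}(0),0)\to(\C^n,X\cap g^{-1}(0),0)$. Both $V(\phi,f)$ and $V(\phi,g)$ are $(n-2)$-dimensional ICIS by \cite[Proposition 2.8]{carlesruas}, already exploited in the proof of Theorem \ref{independe de f}. The main step—and the one I expect to be the delicate part of the argument—is then the topological invariance of the Milnor number of an ICIS, which follows because the Milnor fibration of an ICIS, and hence the Euler characteristic of its Milnor fibre, is a topological invariant of the embedded germ. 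Combining this with the first equality and substituting into Corollary \ref{principal} yields $\mu_{BR}(f,X)=\mu_{BR}(g,X)$.
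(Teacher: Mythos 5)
Your proposal is correct and follows essentially the same route as the paper: apply Corollary \ref{principal} to both germs, note that $\mu(X,0)$ and $\tau(X,0)$ are independent of the function, and invoke the topological invariance of $\mu(f)$ (Milnor) and of $\mu(\phi,f)$ as the Milnor number of the ICIS $X\cap f^{-1}(0)$, whose image under the homeomorphism is $X\cap g^{-1}(0)$. The paper's proof is just a terser version of your argument, citing the same two invariance facts without further elaboration.
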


\begin{proof}
Let $h:(\C^n,0)\to(\C^n,0)$ be a homeomorphism such that $h(X)=X$ and $g=f\circ h$. We have $h(X\cap f^{-1}(0))=X\cap g^{-1}(0)$. If  $(X,0)$ is defined by $\phi\colon(\C^{n},0)\to(\C,0)$, then $\mu(f,\phi)=\mu(g,\phi)$, because the Milnor number of an ICIS is a topological invariant. Moreover, $f$ and $g$ are also $C^{0}$-$\mathcal{R}$-equivalent, so $\mu(f)=\mu(g)$ by \cite{Milnor}. By Corollary \ref{principal}, we get $\mu_{BR}(f,X)=\mu_{BR}(g,X)$.
\end{proof}


From Corollary \ref{principal} and the Lê-Greuel formula \cite{greuel}, we also have the following:

\begin{cor}\label{nbrsemt} 
Let $(X,0)$ be the isolated hypersurface singularity defined by $\phi\colon(\C^{n},0)\to(\C,0)$, and let $f\in\mathcal{O}_{n}$ be finitely $\mathcal{R}_{X}$-determined. Then 
$$\mu_{BR}(f,X)=\dim_\C \frac{\mathcal O_n}{\langle f\rangle+J(f,\phi)}+\mu(X,0)-\tau(X,0).$$


\end{cor}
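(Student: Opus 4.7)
The plan is to combine Corollary \ref{principal} with the Lê-Greuel formula in a straightforward way. Corollary \ref{principal} already tells us
$$\mu_{BR}(f,X)=\mu(f)+\mu(\phi,f)+\mu(X,0)-\tau(X,0),$$
so it suffices to identify the sum $\mu(f)+\mu(\phi,f)$ with the colength $\dim_\C \mathcal{O}_n/(\langle f\rangle+J(f,\phi))$.

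For this, I would invoke the Lê-Greuel formula \cite{greuel} applied to the ICIS $(f,\phi)\colon(\C^n,0)\to(\C^2,0)$. Recall that the hypothesis $\mu_{BR}(f,X)<\infty$ implies, via \cite[Proposition 2.8]{carlesruas}, that $(\phi,f)$ indeed defines an ICIS, which is used already in the proof of Theorem \ref{independe de f}; moreover $f$ itself has isolated singularity (since $df(\Theta_X)\subset Jf$), so $\mu(f)<\infty$. The Lê-Greuel formula in this situation reads
$$\mu(f,\phi)+\mu(f)=\dim_\C\frac{\mathcal O_n}{\langle f\rangle+J(f,\phi)}.$$
Since the Milnor number of an ICIS does not depend on the order of the defining components, $\mu(f,\phi)=\mu(\phi,f)$, so
$$\mu(f)+\mu(\phi,f)=\dim_\C\frac{\mathcal O_n}{\langle f\rangle+J(f,\phi)}.$$

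Substituting this identity into the formula from Corollary \ref{principal} yields the desired expression. There is no real obstacle here; the only point worth checking carefully is that the hypotheses of the Lê-Greuel formula are met (isolated singularity of $f$ and ICIS condition on $(f,\phi)$), both of which are guaranteed by the finiteness of $\mu_{BR}(f,X)$ via the results already established earlier in the paper.
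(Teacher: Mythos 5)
Your proof is correct and follows exactly the route the paper intends: the paper states this corollary as an immediate consequence of Corollary \ref{principal} and the L\^e--Greuel formula, which is precisely your argument. Your additional verification that $(f,\phi)$ is an ICIS and that $\mu(f)<\infty$ under the finiteness hypothesis is a welcome bit of care, but otherwise there is nothing to add.
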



We can use our results to relate the Bruce-Roberts number of a generic linear projection to the top polar multiplicity of an hypersurface as defined by Gaffney \cite{Gaffney}.  Let $(X,0)\subset(\C^{n},0)$ be an isolated hypersurface singularity defined as the zero set of a function germ $\phi\colon(\C^n,0)\to\C$, the ($n-1$)-th polar multiplicity is defined as
$$m_{n-1}(X,0)=\dim_{\C}\frac{\mathcal{O}_{n}}{\langle\phi\rangle+J(p,\phi)},$$
where $p\colon(\C^{n},0)\to(\C,0)$ is a generic linear projection. From the proofs of Theorems \ref{dfthetax} and  \ref{independe de f}, we observe that
$$\mu_{BR}(p,X)=\dim_{\C}\frac{\mathcal{O}_{n}}{\langle\phi\tfrac{\partial p}{\partial x_{i}}\rangle+J(p,\phi)}-\tau(X,0),$$
therefore, 
$$m_{n-1}(X,0)=\mu_{BR}(p,X)+\tau(X,0).$$

On the other hand, in \cite{amigosjoao}, Perez and Saia show that
\begin{equation*}\label{ps}
	1+(-1)^{n-1}\mu(X,0)=\sum_{i=0}^{n-1}(-1)^{i}m_{i}(X,0),
\end{equation*}
where $m_{i}(X,0)$ is the $i$-th polar multiplicity. From Lê-Tessier's formula, we have
\begin{equation*}\label{lt}
	\Eu(X,0)=\sum_{i=0}^{n-2}(-1)^{n-i-2}m_{i}(X,0),
\end{equation*}
where $\Eu(X,0)$ denotes the Euler obstruction of $(X,0)$ (see \cite{eulerobstruction}). Hence,
$$m_{n-1}(X,0)-\mu (X,0)=\Eu(X,0)+(-1)^{n-1}.$$

We can conclude, hence, the following corollary.
\begin{cor}\label{multiplicidade polar}
	Let $p\colon(\C^{n},0)\to(\C,0)$ a generic linear projection and $(X,0)\subset(\C^n,0)$ be a hypersurface with isolated singularity. Then:
	\begin{enumerate}
	 \item $m_{n-1}(X,0)=\mu_{BR}(p,X)+\tau(X,0)$;
	 \item  $\Eu(X,0)=\mu_{BR}(p,X)+\tau(X,0)-\mu(X,0)+(-1)^n.$
	 \end{enumerate}
\end{cor}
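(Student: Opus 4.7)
The plan is to recognize that both parts of the corollary fall out directly from identities already assembled in the preceding discussion, so my proof proposal is essentially a bookkeeping argument rather than new mathematics.

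For part (1), I would go back to the proof of Theorem \ref{dfthetax}, in which we established the identity
$$\mu_{BR}(f,X) = \dim_{\C}\frac{\mathcal{O}_n}{df(\Theta_X^T)} - \dim_{\C}\frac{df(\Theta_X)}{df(\Theta_X^T)}.$$
Applying Corollary \ref{cor tjurina} to the second term converts it into $\tau(X,0)$. Now specialize to $f=p$, a generic linear projection with $p(x) = a_1 x_1 + \dots + a_n x_n$ and some $a_i \neq 0$. Since each $\partial p/\partial x_j$ is a nonzero constant (for the appropriate $j$), the ideal $\langle \phi \, \partial p/\partial x_j : j\rangle$ collapses to $\langle \phi\rangle$, so
$$dp(\Theta_X^T) = \langle \phi\rangle + J(p,\phi),$$
and by definition the colength of this ideal is exactly $m_{n-1}(X,0)$. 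This immediately gives $\mu_{BR}(p,X) = m_{n-1}(X,0) - \tau(X,0)$, which is (1).

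For part (2), I would combine the Perez-Saia formula and the L\^e-Teissier formula quoted just before the statement. Isolating the top term $i=n-1$ in Perez-Saia,
$$(-1)^{n-1}m_{n-1}(X,0) + \sum_{i=0}^{n-2}(-1)^{i}m_{i}(X,0) = 1 + (-1)^{n-1}\mu(X,0),$$
and noting that $(-1)^{i} = (-1)^{-i}$, the tail sum multiplied by $(-1)^{n-2}$ is exactly $\Eu(X,0)$. After multiplying through and simplifying, one obtains the identity
$$m_{n-1}(X,0) - \mu(X,0) = \Eu(X,0) + (-1)^{n-1}$$
already recorded in the text. Substituting (1) into this equality and moving terms across yields $\Eu(X,0) = \mu_{BR}(p,X) + \tau(X,0) - \mu(X,0) + (-1)^{n}$, which is (2).

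There is no genuine obstacle here; the entire corollary is a consequence of Theorem \ref{dfthetax}, Corollary \ref{cor tjurina}, and the two classical polar-multiplicity formulas. The only place one can slip is in the sign bookkeeping when passing between the two alternating sums, so I would write out that step explicitly and verify that $-(-1)^{n-1} = (-1)^{n}$ at the final rearrangement.
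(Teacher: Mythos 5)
Your proposal is correct and follows essentially the same route as the paper: the text preceding the corollary derives $\mu_{BR}(p,X)=\dim_{\C}\mathcal{O}_{n}/\bigl(\langle\phi\tfrac{\partial p}{\partial x_{i}}\rangle+J(p,\phi)\bigr)-\tau(X,0)$ from the proofs of Theorems \ref{dfthetax} and \ref{independe de f} (with the same observation that linearity of $p$ collapses $\langle\phi\tfrac{\partial p}{\partial x_i}\rangle$ to $\langle\phi\rangle$), and then combines the Perez--Saia and L\^e--Teissier formulas exactly as you do, with the same sign bookkeeping. Nothing is missing.
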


We remark, by Corollary \ref{multiplicidade polar}, that the Bruce-Roberts number of a generic linear projection on an isolated hypersurface singularity does not depend on the projection.

\section{The logarithmic characteristic variety}

We recall the definitions of logarithmic stratification and of logarithmic characteristic variety due to Saito \cite{saito}. Let $(X,0)\subset(\C^n,0)$ be a germ of a reduced analytic subvariety. Take a representative $X$ on some small open neighbourhood $U$ of 0 in $\C^{n}$. For each $x\in U$, we denote by $\Theta_{X}(x)$ the linear subspace of $T_{x}U$ generated by the vectors $\delta(x)$, with $\delta\in \Theta_{X,x}$.

\begin{lem}\cite{B-R,saito}
There is a unique stratification $\{X_{\alpha};\;\alpha \in I\}$ of $U$ that satisfies the following properties:
\begin{enumerate}
\item Each stratum $X_{\alpha}$ is a smooth connected immersed submanifold of $U$ and $U$ is the disjoint union  $\cup_{\alpha\in I}X_{\alpha}.$
\item If $x\in U$ lies in a stratum $X_{\alpha}$, then the tangent space $T_{x}X_{\alpha}$ coincides with $\Theta_{X}(x).$
\item If $X_\alpha$ and $X_\beta$ are two distinct strata with $X_\alpha$ meeting the closure of $X_\beta$ then $X_\alpha$ is contained in the frontier of $X_\beta$.
\end{enumerate}
\end{lem}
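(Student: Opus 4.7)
The plan is to construct the stratification as the partition of $U$ into \emph{orbits} of the pseudogroup generated by local flows of vector fields in $\Theta_X$: declare $x\sim y$ if there exist $\xi_1,\dots,\xi_m\in\Theta_X$ and complex parameters $t_1,\dots,t_m$ (for which the flows are locally defined) such that $y=\varphi_{t_m}^{\xi_m}\circ\cdots\circ\varphi_{t_1}^{\xi_1}(x)$, and let the $X_\alpha$ be the equivalence classes. The essential algebraic input is that $\Theta_X$ is coherent as an $\mathcal{O}_n$-module and is closed under Lie brackets, because if $\xi,\eta$ preserve $I(X)$ then so does $[\xi,\eta]$; coherence allows us to pick, locally around each $x_0\in U$, a finite generating set $\xi_1,\dots,\xi_r$ of $\Theta_{X,x_0}$.

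Next, I would derive (1) and (2) from a holomorphic Stefan--Sussmann argument applied to the singular distribution $x\mapsto\Theta_X(x)$. Setting $k=\dim_\C\Theta_X(x_0)$, I would choose $k$ local generators whose values at $x_0$ form a basis of $\Theta_X(x_0)$; the map $(t_1,\dots,t_k)\mapsto\varphi_{t_k}^{\xi_{i_k}}\circ\cdots\circ\varphi_{t_1}^{\xi_{i_1}}(x_0)$ is then an immersion at $0$ and parametrizes a $k$-dimensional piece of the orbit through $x_0$. Bracket-closedness guarantees that composing additional flows does not leave this piece (any new flow direction lies in the span of the Jacobian already realized), so $X_\alpha$ inherits the structure of a smooth connected immersed submanifold with $T_xX_\alpha=\Theta_X(x)$, giving (2); the disjoint union of connected equivalence classes in (1) is built into the definition.

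The frontier property (3) would follow from the upper semicontinuity of $x\mapsto\dim_\C\Theta_X(x)$ combined with continuity of flows: if $x_0\in X_\alpha\cap\overline{X_\beta}$ and $y_n\in X_\beta$ converges to $x_0$, then applying the same chain of flows that traces out an arbitrary point $z\in X_\alpha$ starting from $x_0$ to the sequence $y_n$ produces points of $X_\beta$ converging to $z$, forcing $z\in\overline{X_\beta}$; since $X_\alpha$ and $X_\beta$ are distinct orbits and the orbits are disjoint, $X_\alpha\subseteq\overline{X_\beta}\setminus X_\beta$. Uniqueness is the final check: any stratification satisfying (1) and (2) has strata that are maximal connected integral submanifolds of the same singular distribution, and two such maximal connected integral submanifolds through a given point must coincide.

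The main obstacle is making the Stefan--Sussmann step rigorous in the holomorphic category, where $\dim_\C\Theta_X(x)$ genuinely jumps along the singular locus of $X$ and the orbits are only immersed, not embedded, submanifolds whose closures can behave pathologically; this is precisely the content of Saito's analysis in \cite{saito}, and any careful proof has to defer to his treatment of how orbits of different dimensions fit together and to the coherence-based argument that orbits are locally finite in $U$.
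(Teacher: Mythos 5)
The paper offers no proof of this lemma: it is quoted verbatim from Bruce--Roberts and Saito, and your outline is essentially the argument Saito gives --- orbits of the pseudogroup generated by local flows of $\Theta_X$, with coherence supplying finite local generation, bracket-closedness supplying integrability of the singular distribution $x\mapsto\Theta_X(x)$, and maximality of connected integral manifolds supplying uniqueness. The sketch is correct in substance; the only slip is that $x\mapsto\dim_\C\Theta_X(x)$ is \emph{lower} semicontinuous (a nonvanishing minor at $x_0$ stays nonvanishing nearby, so the rank can only increase on a neighbourhood), not upper semicontinuous as you state --- but this is harmless, since your actual argument for the frontier condition (transporting the approximating sequence $y_n\to x_0$ along the chain of flows that carries $x_0$ to an arbitrary $z\in X_\alpha$) does not use semicontinuity at all.
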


\begin{defi}
The stratification $\{X_\alpha:\alpha\in I\}$ of the previous lemma is called the \emph{logarithmic stratification} of $X$ and the strata $X_\alpha$, the \emph{logarithmic strata}. The germ $(X,0)$ is {\em holonomic} if, for some neighborhood $U$ of $0$ in $\C^n$, the logarithmic stratification has only finitely many strata.
\end{defi}

\begin{defi} The \emph{logarithmic characteristic variety}, $LC(X,0)$, is defined as follows. 
Suppose the vector fields $\delta_1,\dots ,\delta_m$ generate $\Theta_X$ on some neighborhood $U$ of $0$ in $\C^n$. Let $T^*_U\C^n$ be the restriction of the cotangent bundle of $\C^n$ to $U$. We define $LC_U(X)$ to be 
$$
LC_U(X)=\{(x,\xi)\in T^*_U\C^n:\xi(\delta_i(x))=0, i=1,\dots ,m\}.
$$ 
Then $LC(X,0)$ is the germ of $LC_U(X)$ in $T^*\C^n$ along $T^*_0\C^n$, the cotangent space to $\C^n$ at $0$.
\end{defi}

We observe that $LC(X,0)$ is a well defined germ of analytic subvariety in $T^*\C^n$ which is independent of the choice of the vector fields $\delta_i$ (see \cite{B-R,saito} for details). If $(X,0)$ is holonomic with logarithmic strata $X_0,\dots,X_k$ then $LC(X,0)$ has dimension $n$, with irreducible components $Y_0,\dots,Y_k$, where $Y_i=\overline{N^*X_i}$, the closure of the conormal bundle $N^*X_i$ of $X_i$ in $\C^n$ (see \cite[Proposition 1.14]{B-R}).

There exists a deep connection between the logarithmic characteristic variety and the Bruce-Roberts number. If $LC(X,0)$ is Cohen-Macaulay, then $\mu_{BR}(f,X)$ is equal to the number of critical points of a Morsification of $f$ on each logarithmic stratum $X_\alpha$, counted with multiplicity (see \cite[Corollary 5.8]{B-R}). However, it also is well known (see \cite[Proposition 5.8]{B-R}) that $LC(X,0)$ is never Cohen-Macaulay when $(X,0)$ has codimension $>1$. Thus, the only interesting case to look at is when $(X,0)$ is hypersurface. 

In a previous paper \cite{Orefice}, we showed that $LC(X,0)$ is Cohen-Macaulay when $(X,0)$ is a weighted homogeneous isolated hypersurface singularity. This fact had several interesting corollaries (see \cite{tomazellaruas,Nivaldoerrata,Nivaldo,Orefice}). In the next theorem, we extend this result to the general case that $(X,0)$ is an isolated hypersurface singularity (not necessarily weighted homogeneous).

%
%

\begin{teo}\label{LC(X)}
Let $(X,0)$ be any isolated hypersurface singularity. Then $LC(X,0)$ is Cohen-Macaulay.
\end{teo}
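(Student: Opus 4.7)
The plan is to adapt the strategy of \cite{Orefice} (the weighted homogeneous case) to the general setting, using Corollary \ref{cor tjurina} to substitute for the Euler vector field that is no longer available.

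First I work in the analytic ring $\mathcal O_n\{x\}[\xi_1,\dots,\xi_n]$. The defining ideal of $LC(X,0)$ is $I=\langle \xi(\delta)\,:\,\delta\in\Theta_X\rangle$, homogeneous of degree one in $\xi$. Using the explicit trivial generators of $\Theta_X^T$, the sub-ideal is
$$
I^T=\langle \phi\,\xi_i,\ \tfrac{\partial\phi}{\partial x_j}\xi_k-\tfrac{\partial\phi}{\partial x_k}\xi_j\rangle,
$$
which is precisely the ideal of $2\times 2$ minors of the $2\times(n+1)$ matrix
$$
M=\begin{pmatrix}\phi & \tfrac{\partial\phi}{\partial x_1} & \cdots & \tfrac{\partial\phi}{\partial x_n}\\[2pt] 0 & \xi_1 & \cdots & \xi_n\end{pmatrix}.
$$
The isolated-singularity hypothesis on $\phi$ forces $V(I^T)\subset\C^{2n}$ to be pure of dimension $n$: its irreducible components are the zero section $\C^n\times\{0\}$, the closure of the conormal $\overline{N^{*}X_{\mathrm{reg}}}$, and the cotangent fibre $\{0\}\times\C^n$. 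Since this is the expected codimension for maximal minors of a $2\times(n+1)$ matrix, the Hochster--Eagon theorem yields that $\mathcal O_n\{x\}[\xi]/I^T$ is Cohen--Macaulay of dimension $n$.

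Next, by Corollary \ref{cor tjurina} and Lemma \ref{lema1}, the full ideal decomposes as $I=I^T+\langle \xi(\delta_1),\dots,\xi(\delta_\tau)\rangle$, where $\delta_1,\dots,\delta_\tau$ represent a $\C$-basis of $\Theta_X/\Theta_X^T$ with $d\phi(\delta_\nu)=\lambda_\nu\phi$ and $\{\lambda_\nu\}$ a basis of the Tjurina algebra $\mathcal O_n/(\langle\phi\rangle+J\phi)$. A direct manipulation of the trivial generators gives the identity
$$
\tfrac{\partial\phi}{\partial x_k}\cdot\xi(\delta_\nu)\ =\ \sum_{i}\delta_{\nu,i}\bigl(\tfrac{\partial\phi}{\partial x_k}\xi_i-\tfrac{\partial\phi}{\partial x_i}\xi_k\bigr)+\lambda_\nu\phi\,\xi_k\ \equiv\ \lambda_\nu\phi\,\xi_k\pmod{I^T}
$$
for all $k=1,\dots,n$, and hence $(\langle\phi\rangle+J\phi)\cdot\xi(\delta_\nu)\subset I^T$. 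Thus each extra generator $\xi(\delta_\nu)$ is a torsion element in $\mathcal O_n[\xi]/I^T$ supported on $\{0\}\times\C^n$, and the passage from $V(I^T)$ to $V(I)=LC(X,0)$ amounts to quotienting out these torsion elements.

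To conclude that $\mathcal O_{LC(X,0)}$ is itself Cohen--Macaulay, I will apply Lemma \ref{7.8} in $\mathcal O_n\{x\}[\xi]$ along the same lines as the proof of Theorem \ref{dfthetax}, but now with the matrix $M$ above and the $n$ slicing elements $\xi_i-c_i$ associated to a generic linear projection $p(x)=\sum c_ix_i$ for which $\mu_{BR}(p,X)<\infty$. The resulting colength equality, combined with the explicit formula of Corollary \ref{principal} and the Cohen--Macaulayness of $\mathcal O_n[\xi]/I^T$, forces $\xi_1-c_1,\dots,\xi_n-c_n$ to be a regular sequence of parameters on $\mathcal O_{LC(X,0)}$ at a generic point of the cotangent fibre; the conical structure of $LC(X,0)$ in the $\xi$-direction together with upper-semicontinuity of depth then propagate the Cohen--Macaulay property to the whole germ along $T_0^{*}\C^n$. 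The main obstacle is precisely this last step: quotienting the Cohen--Macaulay ring $\mathcal O_n[\xi]/I^T$ by the torsion ideal generated by the $\xi(\delta_\nu)$ could a priori destroy Cohen--Macaulayness, and the argument succeeds only because the Milnor--Tjurina--Bruce-Roberts identities established in Section 3 exactly match the colengths that a Cohen--Macaulay structure of dimension $n$ would predict.
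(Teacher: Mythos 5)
Your opening observations are sound: the ideal $I^T$ generated by the trivial part of $\Theta_X$ is indeed the ideal of $2\times 2$ minors of the matrix $M$, its zero set is purely $n$-dimensional, so Hochster--Eagon gives that $\mathcal O_n\{x\}[\xi]/I^T$ is Cohen--Macaulay; and the identity $\tfrac{\partial\phi}{\partial x_k}\xi(\delta_\nu)\equiv 0 \pmod{I^T}$ is correct, so the extra generators coming from $\Theta_X/\Theta_X^T$ are annihilated by the Tjurina ideal. But the proof stops exactly where the real difficulty begins, and you say so yourself: passing from the Cohen--Macaulay ring $\mathcal O_n\{x\}[\xi]/I^T$ to its quotient by the classes of $\xi(\delta_1),\dots,\xi(\delta_\tau)$ can destroy Cohen--Macaulayness, and nothing in your last paragraph actually rules this out. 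The appeal to ``upper-semicontinuity of depth'' points in the wrong direction: the Cohen--Macaulay locus is \emph{open}, so knowing the property at generic points of a component (or after slicing by $\xi_i-c_i$ at a generic covector) says nothing about the special points $(0,p)\in T_0^*\C^n$, which are precisely the points where Cohen--Macaulayness must be established. Likewise, the claim that ``the colength identities exactly match what a Cohen--Macaulay structure would predict'' is the statement to be proved, not an argument; you never compute the multiplicities $m_i$ of the components of $LC(X,0)$ nor compare them with $\mu_{BR}$.

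The paper closes this gap by a different route: it never works with the defining ideal of $LC(X,0)$ in $\mathcal O_n\{x\}[\xi]$ at all. Instead it invokes the numerical criterion of Bruce--Roberts (\cite[Corollary 5.8]{B-R}), which says that $LC(X,0)$ is Cohen--Macaulay at $(0,p)$ if and only if $\mu_{BR}(f,X)=\sum_i n_im_i$ for a Morsification of some $f$ with $df(0)=p$, where $n_i$ counts critical points on the stratum $X_i$ and $m_i$ is the multiplicity of the component $Y_i=\overline{N^*X_i}$. This identity is then verified using Corollary \ref{nbrsemt} together with conservation of number for the algebra $\mathcal O_{n+1}/(\langle F\rangle+J(f_t,\phi))$, which is determinantal of the expected codimension and hence Cohen--Macaulay by Eagon--Hochster; the local contributions at critical points on each stratum are computed separately and summed. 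So the Eagon--Hochster input enters for an auxiliary one-dimensional determinantal algebra in $\mathcal O_{n+1}$, not for $I^T$ in $\mathcal O_n\{x\}[\xi]$. If you want to salvage your approach, you would need either to prove directly that the classes of the $\xi(\delta_\nu)$ generate a maximal Cohen--Macaulay module over $\{0\}\times\C^n$ (so that the quotient stays Cohen--Macaulay), or to switch to the multiplicity-counting criterion as the paper does.
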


\begin{proof}
We assume that $X=\phi^{-1}(0)$, with $\phi:(\C^n,0)\to(\C,0)$. We have to prove that $LC(X,0)$ is Cohen-Macaulay at every point $(0,p)\in LC(X,0)$. Let $f\in\mathcal O_n$ be finitely $\mathcal{R}_{X}$-determined such that $df(0)=p$. Let $C_1,\dots,C_k$ be the irreducible components of $X$. The logarithmic strata of $X$ are $X_0=\C^n\smallsetminus X$, $X_i=C_i\smallsetminus\{0\}$ for $i=1,\dots,k$ and $X_{k+1}=\{0\}$. Let $F\colon(\C^n\times\C,0)\to\C$ be given by $F(x,t)=f_t(x)$ a Morsification of $f$. It follows from \cite[Corollary 5.8]{B-R} that $LC(X,0)$ is Cohen-Macaulay at $(0,p)$ if and only if 
\[
\mu_{BR}(f,X)=\sum_{i=0}^{k+1} n_i m_i,
\] 
where $n_i$ is the number of critical points of $f_t$ on the stratum $X_i$ and $m_i$ is the multiplicity of the corresponding irreducible component $Y_i$ of $LC(X,0)$. 

By Corollary \ref{nbrsemt}, we know that
\[
\mu_{BR}(f,X)=\dim_\C \frac{\mathcal O_n}{\langle f\rangle+J(f,\phi)}+\mu(X,0)-\tau(X,0).
\]
We write $\mathcal O_{n+1}/(\langle F\rangle+J(f_t,\phi))=R/I$, where $R=\mathcal O_{n+1}/\langle F\rangle$ and $I=(\langle F\rangle+J(\phi,f_t))/\langle F\rangle$. The ring $R$ is Cohen-Macaulay of dimension $n$ and $I$ is generated by the 2-minors of a matrix of size $n\times 2$. Since $\dim R/I=1$ and $1=n-(n-2+1)(2-2+1)$, it follows that $R/I$ is determinantal and, therefore, Cohen-Macaulay, by Eagon-Hochster result \cite{E-H}. In particular, we have conservation of multiplicity, that is, for all $t\ne0$, we have
\[
\dim_\C \frac{\mathcal O_n}{\langle f\rangle+J(f,\phi)}=\sum_{i=0}^{k+1}\sum_{x\in X_i}\dim_\C \frac{\mathcal O_{n,x}}{\langle f_t\rangle+J(f_t,\phi)}.
\]
We remark the sum in the right hand side has only a finite number of terms, corresponding to the critical points of $f_t$ on $X_i$. 

When $i=1,\dots,k$ and $x\in X_i$, $X$ is smooth at $x$ and $m_i=1$, hence  
\[
\sum_{x\in X_i}\dim_\C \frac{\mathcal O_{n,x}}{\langle f_t\rangle+J(f_t,\phi)}=\sum_{x\in X_i}\mu_{BR}(f_t,X)_x=\sum_{i=1}^{k}n_i=\sum_{i=1}^{k}n_im_i,
\]
since $f_t$ is a Morse function (see \cite[Proposition 5.12]{B-R}). For $i=0$, if $x\in X_0$ then $x\notin X$. We have
\[
\sum_{x\in X_0}\dim_\C \frac{\mathcal O_{n,x}}{\langle f_t\rangle+J(f_t,\phi)}=\sum_{x\in X_0}\mu(f_t)_x=n_0=n_0m_0,
\]
since $f_t$ is a Morse function and in this case $m_0=1$. Finally, for $i=k+1$ we have just one critical point $x=0$ in $X_{k+1}=\{0\}$. Thus,
\[
\dim_\C \frac{\mathcal O_{n,0}}{\langle f_t\rangle+J(f_t,\phi)}=\mu_{BR}(f_t,X)_0-\mu(X,0)+\tau(X,0)=n_{k+1}m_{k+1}-\mu(X,0)+\tau(X,0),
\]
since $f_t$ is a Morse function and in this case $n_{k+1}=1$. Summing up for all $i=0,\dots,k+1$ we get
\[
\dim_\C \frac{\mathcal O_n}{\langle f\rangle+J(f,\phi)}=\sum_{i=0}^{k+1}n_im_i-\mu(X,0)+\tau(X,0)
\]
and hence $\mu_{BR}(f,X)=\sum_{i=0}^{k+1}n_im_i$.

\end{proof}

\begin{cor}\label{similar bruna1} Let $(X,0)$ be an isolated hypersurface singularity and let $f\in\mathcal{O}_{n}$ be finitely $\mathcal{R}_{X}$-determined. Then,
$$\mu_{BR}(f,X)=\mu(f)+N+m_{n-1}(X,0)-\tau(X,0),$$ where $N$ is the number of critical points of a Morsification of $f$ on $X\smallsetminus\{0\}$.
\end{cor}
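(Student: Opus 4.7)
My plan is to use Theorem \ref{LC(X)} together with Corollary \ref{multiplicidade polar}(1), pinning down the multiplicity of the component of $LC(X,0)$ that sits over the singular point. By Theorem \ref{LC(X)}, $LC(X,0)$ is Cohen-Macaulay, so by the Bruce-Roberts principle \cite[Cor.~5.8]{B-R} one has
$$
\mu_{BR}(f,X)=\sum_{i=0}^{k+1}n_i m_i,
$$
where $X_0=\C^n\smallsetminus X$, $X_1,\dots,X_k$ are the smooth parts of the irreducible components of $X$, and $X_{k+1}=\{0\}$ are the logarithmic strata; $n_i$ counts the stratified critical points of a Morsification $f_t$ of $f$ lying on $X_i$, and $m_i$ is the multiplicity of the corresponding component $Y_i=\overline{N^*X_i}$ of $LC(X,0)$.

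Next I would compute the known pieces. Since $X_0, X_1,\dots,X_k$ are smooth, the corresponding conormal bundles have multiplicity $1$, so $m_0=\cdots=m_k=1$. A generic Morsification has all its $\mu(f)$ ordinary Morse critical points off of the codimension-one set $X$, giving $n_0=\mu(f)$; by definition $\sum_{i=1}^{k}n_i=N$; and $n_{k+1}=1$ since $X_{k+1}=\{0\}$ is a point. Hence
$$
\mu_{BR}(f,X)=\mu(f)+N+m_{k+1}.
$$

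To identify the remaining quantity $m_{k+1}$, which is an intrinsic invariant of $(X,0)$, I would apply the same identity to a generic linear projection $p\colon(\C^n,0)\to(\C,0)$. Since $p$ is linear, $\mu(p)=0$, and since $V(\langle\phi\rangle+J(\phi,p))$ is concentrated at the origin in a sufficiently small neighbourhood (this is precisely the setting in which $m_{n-1}(X,0)$ is defined), $p|_{X}$ has no stratified critical points on $X\smallsetminus\{0\}$, so $N_p=0$. The formula then reads $\mu_{BR}(p,X)=m_{k+1}$, which by Corollary \ref{multiplicidade polar}(1) equals $m_{n-1}(X,0)-\tau(X,0)$. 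Substituting $m_{k+1}=m_{n-1}(X,0)-\tau(X,0)$ back into the expression for $\mu_{BR}(f,X)$ yields the stated identity.

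The main obstacle is justifying the two genericity claims $n_0=\mu(f)$ and $N_p=0$, together with the universal validity of the decomposition $\mu_{BR}(f,X)=\mu(f)+N+m_{k+1}$ with the same $m_{k+1}$ for both $f$ and $p$. The first two follow from standard transversality and the zero-dimensional support of the polar scheme for a generic linear projection in a small neighbourhood of $0$; the third is automatic from the intrinsic nature of $LC(X,0)$.
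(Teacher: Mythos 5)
Your proposal is correct and follows essentially the same route as the paper: Theorem \ref{LC(X)} plus \cite[Corollary 5.8]{B-R} give $\mu_{BR}(f,X)=\sum_i n_im_i$, the smooth strata contribute $\mu(f)+N$ with $m_0=\dots=m_k=1$, and the multiplicity $m_{k+1}$ over the origin is identified with $\mu_{BR}(p,X)=m_{n-1}(X,0)-\tau(X,0)$ for a generic linear $p$. The only cosmetic difference is that the paper cites \cite[Propositions 5.12 and 5.14]{B-R} to get $m_{k+1}=\mu_{BR}(p,X)_0$ and then computes via Corollary \ref{nbrsemt} and the L\^e--Greuel formula, whereas you rederive the same identification by applying the critical-point count to $p$ itself and quote Corollary \ref{multiplicidade polar}(1), which the paper had already established by that very computation.
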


\begin{proof}
	By Theorem \ref{LC(X)} and \cite[Corollary 5.8, Propositions 5.12 and 5.14]{B-R}, we have
\begin{align*}
\mu_{BR}(f,X)&=\sum^{k+1}_{i=0}n_{i}m_{i}\\
             &=\mu(f)+N+n_{k+1}m_{k+1}\\
             &=\mu(f)+N+\mu_{BR}(p,X)_0\\
             &\stackrel{(*)}{=}\mu(f)+N+\dim_{\C}\frac{{\mathcal O}_{n}}{\langle p\rangle+J(p,\phi)}+\mu(X,0)-\tau(X,0)\\
             &\stackrel{(**)}{=}\mu(f)+N+\mu(p)+\mu(X\cap p^{-1}(0),0)+\mu(X,0)-\tau(X,0)\\
             &=\mu(f)+N+m_{n-1}(X,0)-\tau(X,0),\\
\end{align*}
where $p:(\C^{n},0)\to(\C,0)$ is a generic linear projection, $(*)$ follows from Corollary \ref{nbrsemt}, $(**)$ follows from the Lê-Greuel formula and the last equality follows again from Lê-Greuel formula and from the definition of the polar multiplicity.
\end{proof}

The constancy of the Milnor number in a family of holomorphic functions is controlled by means of the integral closure of the Jacobian ideal (see \cite{greuel2}). Similar results have been obtained for the Bruce-Roberts number in \cite{tomazellaruas}, but there the authors need the additional hypothesis that $LC(X,0)$ is Cohen-Macaulay. It follows from our Theorem \ref{LC(X)} that the results in \cite{tomazellaruas} are true for any isolated hypersurface singularity.


More specifically,  let $(X,0)\subset(\C^n,0)$ be an isolated hypersurface singularity and let $f\colon(\C^n,0)\to \C$ be finitely $\mathcal R_X$-determined. Given
$F\colon(\C^{n}\times\C, 0)\to(\C, 0)$ a deformation of $f$, we put $f_t(x)=F(x,t)$.
We say that $F$ is a \emph{$\mu_{BR}$-constant} deformation of $f$ if $\mu_{BR}(f_t,X)=\mu_{BR}(f,X)$ for $t$ small enough. 

We also recall that the \emph{polar curve} of $F$ is defined as
$$C:=\{(x,t)\in \C^{n}\times\C;\; dF(\xi_{i})(x,t) = 0\, \forall i = 1,\dots,p\},$$ 
where $\xi_1,\dots,\xi_p$ are generators of $\Theta_X$. The following corollary is an immediate consequence of the results of \cite{tomazellaruas} and Theorem \ref{LC(X)}.

\begin{cor}\label{cor-tomazellaruas}
With the above notation, the following statements are equivalent:
	\begin{itemize}
		\item [(i)] $F$ is a $\mu_{BR}$-constant deformation of $f$;
		\item[(ii)] The polar curve of $F$ with respect to $\{t=0\}$ does not split, that is, $C=\{0\}\times\C.$
	\end{itemize}
Moreover, if $\frac{\partial F}{\partial t}\in\overline{df(\Theta_{X})}$ then $F$ is a $\mu_{BR}$-constant deformation of $f$ (where $\overline{I}$ is the integral closure of an ideal $I$).
		
\end{cor}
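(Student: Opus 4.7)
The plan is to reduce the statement directly to the corresponding results of \cite{tomazellaruas}, where the same equivalence and the same integral-closure sufficient condition are proved under the extra hypothesis that $LC(X,0)$ is Cohen--Macaulay. Since Theorem \ref{LC(X)} now shows that this hypothesis is automatic for any isolated hypersurface singularity, the statements transfer verbatim.

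More precisely, the first step is to recall the setup from \cite{tomazellaruas}: one considers the deformation $F$ as a one-parameter family, forms the corresponding family of ideals $df_t(\Theta_X)$ in $\mathcal O_n$, and uses the principle of conservation of multiplicity for flat families of Cohen--Macaulay modules applied to $LC(X,0)$. The Cohen--Macaulay property of $LC(X,0)$ is precisely what guarantees that $\mu_{BR}(f_t,X)$ behaves well under specialization and is equal to the sum of local Bruce--Roberts numbers at the critical points of a nearby $f_t$, weighted by the multiplicities along the strata (this is the content of \cite[Corollary 5.8]{B-R} invoked in \cite{tomazellaruas}). Thus the key point to verify before quoting \cite{tomazellaruas} is that we may apply Theorem \ref{LC(X)} in our setting; this is immediate because $(X,0)$ is assumed to be an isolated hypersurface singularity and $f$ is finitely $\mathcal R_X$-determined.

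The second step is to derive the equivalence (i) $\Leftrightarrow$ (ii) from the conservation of multiplicity: $\mu_{BR}$-constancy forces all the Bruce--Roberts numbers at nearby critical points on the stratum $X_{k+1}=\{0\}$ to concentrate at the origin, which is equivalent to saying that the polar curve $C$ has no components other than $\{0\}\times\C$. Conversely, if $C=\{0\}\times\C$, no critical points escape from the origin, so the total Bruce--Roberts number is preserved. Finally, for the sufficient condition, if $\partial F/\partial t\in\overline{df(\Theta_X)}$, then by the standard integral closure criterion (\cite{tomazellaruas}) the polar curve cannot split, so (ii) holds and hence (i) follows by the equivalence just established.

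I expect no substantial obstacle, since all the hard work has been carried out either in \cite{tomazellaruas} (for the Cohen--Macaulay case) or in Theorem \ref{LC(X)} (which removes that hypothesis). The only thing to be careful about is a clean citation that makes clear which parts of \cite{tomazellaruas} are being invoked, so that the reader sees that the proof there uses the Cohen--Macaulay hypothesis \emph{only} through the conservation of the Bruce--Roberts number along the family, and that our Theorem \ref{LC(X)} exactly supplies this ingredient in the general isolated hypersurface case.
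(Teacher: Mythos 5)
Your proposal is correct and matches the paper's argument: the corollary is stated there as an immediate consequence of the results of \cite{tomazellaruas} combined with Theorem \ref{LC(X)}, which supplies the Cohen--Macaulay hypothesis on $LC(X,0)$ that those results require. The extra detail you give about how the Cohen--Macaulay property enters via conservation of multiplicity is consistent with, and slightly more explicit than, what the paper records.
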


Other interesting results for $\mu_{BR}$-constant families of functions are obtained by Grulha in \cite{Nivaldo}. Again, these results need the additional hypothesis that $LC(X,0)$ is Cohen-Macaulay, which is not necessary any more by our Theorem \ref{LC(X)}. In the following corollary we denote by $\Eu(f,X)$ the relative local Euler obstruction of a function $f$ on $(X,0)$ (see \cite{Nivaldo} for details).

\begin{cor} With the same notation as in Corollary \ref{cor-tomazellaruas}:
	\begin{itemize}
		\item[(i)] If $\mu_{BR}(f_t,X)$ is constant, then $\mu(f_{t})$, $\mu(X\cap f_t^{-1}(0),0)$ and $\Eu(f_{t},X)$ are constant in the family.		
		\item[(ii)] If $\mu(f_{t})$ is constant, then the constancy of $\Eu(f_{t},X)$ or $\mu(X\cap f_t^{-1}(0),0)$ implies that $\mu_{BR}(f_t,X)$ is constant in the family,
	\end{itemize}
\end{cor}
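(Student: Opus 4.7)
The plan is to combine Corollary \ref{principal} with the results of Grulha in \cite{Nivaldo}; the latter are conditional on $LC(X,0)$ being Cohen--Macaulay, and our Theorem \ref{LC(X)} discharges that hypothesis in full generality. The master identity from Corollary \ref{principal}, together with $\mu(\phi,f_t)=\mu(X\cap f_t^{-1}(0),0)$, reads
\[
\mu_{BR}(f_t,X)=\mu(f_t)+\mu(X\cap f_t^{-1}(0),0)+c_X,
\]
where $c_X=\mu(X,0)-\tau(X,0)$ depends only on $(X,0)$. Thus $\mu_{BR}(f_t,X)$ is, up to an additive constant, the sum of two Milnor-type numbers, each of which is upper semicontinuous in $t$; this reduces a good portion of the work to a pure semicontinuity argument.

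For part (i), both $\mu(f_t)$ and $\mu(X\cap f_t^{-1}(0),0)$ are upper semicontinuous in $t$ (the former classically; the latter because $(\phi,f_t)$ defines an ICIS, as used in the proof of Theorem \ref{independe de f}). Since their sum is constant by assumption and each summand is individually upper semicontinuous, both must in fact be constant, settling the first two invariants. For the constancy of $\Eu(f_t,X)$ I would appeal to Grulha's theorem in \cite{Nivaldo}, which is now available unconditionally thanks to Theorem \ref{LC(X)}.

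For part (ii), the displayed formula shows directly that, once $\mu(f_t)$ is constant, the constancy of $\mu(X\cap f_t^{-1}(0),0)$ is equivalent to that of $\mu_{BR}(f_t,X)$, which disposes of one of the two alternatives. The remaining implication---constancy of $\mu(f_t)$ and $\Eu(f_t,X)$ together forcing $\mu_{BR}(f_t,X)$ to be constant---is another of Grulha's statements in \cite{Nivaldo}, proved under the Cohen--Macaulay hypothesis on $LC(X,0)$ and now automatic by Theorem \ref{LC(X)}. The main obstacle is essentially bookkeeping, namely verifying that the precise formulations in \cite{Nivaldo} line up with the two directions claimed here; beyond that no further algebraic work is needed, since Corollary \ref{principal} absorbs the entire Milnor-theoretic side of the argument.
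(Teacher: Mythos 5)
Your proposal is correct and follows essentially the route the paper intends: the corollary is stated there without an explicit proof, as an immediate consequence of Grulha's results in \cite{Nivaldo} once their Cohen--Macaulay hypothesis on $LC(X,0)$ is discharged by Theorem \ref{LC(X)}. Your supplementary semicontinuity argument via Corollary \ref{principal} --- constancy of $\mu(f_t)+\mu(X\cap f_t^{-1}(0),0)$ together with upper semicontinuity of each summand forces each to be constant --- is sound and gives a self-contained justification of the Milnor-number assertions, leaving only the $\Eu(f_t,X)$ statements to rest on \cite{Nivaldo} as the paper does.
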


\printindex
\end{document}